\newtheorem{assumption}{Assumption}
\newtheorem*{theorem*}{Theorem}
\newtheorem*{example*}{Example} 
\newtheorem*{definition*}{Definition}
\newtheorem*{lemma*}{Lemma}
\newtheorem*{remark*}{Remark}
\newtheorem*{corollary*}{Corollary}
\newtheorem*{proposition*}{Proposition}
\newtheorem*{assumption*}{Assumption}
\newtheorem*{claim*}{Claim}
\newtheoremstyle{TheoremNum}
        {\topsep}{\topsep}              %%% space between body and thm
        {\itshape}                      %%% Thm body font
        {}                              %%% Indent amount (empty = no indent)
        {\bfseries}                     %%% Thm head font
        {.}                             %%% Punctuation after thm head
        { }                             %%% Space after thm head
        {\thmname{#1}\thmnote{ \bfseries #3}}%%% Thm head spec
\theoremstyle{TheoremNum}
\newtheoremstyle{LemmaNum}
        {\topsep}{\topsep}              %%% space between body and thm
        {\itshape}                      %%% Thm body font
        {}                              %%% Indent amount (empty = no indent)
        {\bfseries}                     %%% Thm head font
        {.}                             %%% Punctuation after thm head
        { }                             %%% Space after thm head
        {\thmname{#1}\thmnote{ \bfseries #3}}%%% Thm head spec
\theoremstyle{LemmaNum}
\newcommand{\x}{ \mathbf{x} }
\newcommand{\y}{ \mathbf{y} }
\newcommand{\z}{ \mathbf{z} }
\renewcommand{\v}{ \mathbf{v} }
\renewcommand{\u}{ \mathbf{u} }
\newcommand{\w}{ \mathbf{w} }
\newcommand{\oa}[1]{ \overrightarrow{#1} }
\newcommand{\cosq}{ \mathrm{cosq} }
\newcommand{\grad}{\mathrm{grad}}
\newcommand{\M}{\mathcal{M}}
\newcommand{\D}{\mathcal{D}}
\newcommand{\Exp}{{\mathrm{Exp}}}
\newcommand{\E}{\mathbb{E}}
\renewcommand{\[}{\left[ }
\renewcommand{\]}{\right] }
\newcommand{\<}{\left< }
\renewcommand{\>}{\right> }
\renewcommand{\(}{\left( }
\renewcommand{\)}{\right) }
\begin{document}

\title{Revisit First-order Methods for Geodesically Convex Optimization
}
%\subtitle{Do you have a subtitle?\\ If so, write it here}

%\titlerunning{Short form of title}        % if too long for running head

% \author{Yunlu Shu$^{a,}$\footnote{\textit{Email}: 22110840008@m.fudan.edu.cn; \textit{Address}: Songhu Rd. 2005, 200438, Yangpu District, Shanghai, China.} \quad Jiaxin Jiang$^{b,}$\footnote{\textit{Email}: jxjiang20@fudan.edu.cn; \textit{Address}: Handan Rd. 220, 200433, Yangpu District, Shanghai, China} \quad Lei Shi$^{b,c,}$\footnote{\textit{Email}:  leishi@fudan.edu.cn; \textit{Address}: Handan Rd. 220, 200433, Yangpu District, Shanghai, China} \quad Tianyu Wang$^{a,}$\footnote{Corresponding author. \textit{Email}: wangtianyu@fudan.edu.cn; \textit{Address}: Songhu Rd. 2005, 200438, Yangpu District, Shanghai, China.} \\ 
% \small $^{a}$Shanghai Center for Mathematical Sciences, Fudan University, Shanghai, China \\ 
% \small $^{b}$School of Mathematical Sciences, Fudan University, Shanghai, China \\ 
% \small $^{c}$Shanghai Key Laboratory for Contemporary Applied Mathematics, Fudan University, Shanghai, China
% } 

\author{Yunlu Shu $^{1}$             \and
        Jiaxin Jiang $^{2}$           \and
        Lei Shi $^{2,3}$      \and
        Tianyu Wang $^{1,\ast}$  
        % \authornote{*}   
}

% %\authorrunning{Short form of author list} % if too long for running head

\institute{Yunlu Shu \at
           ylshu22@m.fudan.edu.cn  
           \and
           Jiaxin Jiang \at
           jxjiang20@fudan.edu.cn 
           \and
            Lei Shi \at
            leishi@fudan.edu.cn 
           \and
           Tianyu Wang  \at
           wangtianyu@fudan.edu.cn
           \and
           $\ast$ Corresponding author
}

\footnotetext[1]{Shanghai Center for Mathematical Sciences, Fudan University, Shanghai, China}
\footnotetext[2]{School of Mathematical Sciences, Fudan University, Shanghai, China}
\footnotetext[3]{Shanghai Key Laboratory for Contemporary Applied Mathematics, Fudan University, Shanghai, China}
%\footnotetext[$\ast$]{Corresponding author}

% \date{Received: date / Accepted: date}
% The correct dates will be entered by the editor

\maketitle

\begin{abstract}

In a seminal work of Zhang and Sra, gradient descent methods for geodesically convex optimization were comprehensively studied. In particular, Zhang and Sra derived a comparison inequality that relates the iterative points in the optimization process. Since their seminal work, numerous follow-ups have studied different downstream usages of their comparison lemma. 
% However, all results along this line relies on strong assumptions, such as bounded domain assumption or curvature bounded below assumption. 

In this work, we introduce the concept of quasilinearization to optimization, presenting a novel framework for analyzing geodesically convex optimization. By leveraging this technique, we establish state-of-the-art convergence rates -- for both deterministic and stochastic settings -- under weaker assumptions than previously required. The technique of quasilinearization may prove valuable for other non-Euclidean optimization problems. 
% The language of quasilinearization might also be of interest for other optimization problems in non-Euclidean spaces. 

    % In a seminal work of Zhang and Sra, gradient descent methods for geodesically convex optimization were comprehensively studied. In particular, Zhang and Sra derived a comparison inequality that relates the iterative points in the optimization process. Since their seminal work, numerous follow-ups have studied different downstream usages of their comparison lemma. However, all results along this line relies on strong assumptions, such as bounded domain assumption or curvature bounded below assumption. 

    % In this work, we introduce the concept of quasilinearization to optimization, presenting a novel framework for analyzing geodesically convex optimization. By leveraging this technique, we establish state-of-the-art convergence rates -- for both deterministic and stochastic settings -- under substantially weaker assumptions than previously required. 
    
    %\textbf{MSC codes:} 90C25, 90C15
    
\keywords{Optimization \and Geodesically convex \and Hadamard manifold}
% \PACS{PACS code1 \and PACS code2 \and more}
\subclass{90C25 \and 90C15}
\end{abstract}

\section{Introduction}

% Geodesically convex optimization is an emerging area of study that merges concepts from differential geometry with optimization theory, creating a rich framework for tackling complex problems across various fields, including machine learning, control theory, and data science. 
Geodesically convex optimization integrates concepts from differential geometry with optimization theory, creating a robust framework for addressing complex problems across various fields, including machine learning, economics, data science, and numerical PDEs. 
%Unlike traditional convex optimization, which relies on the linear structures of underlying spaces, geodesically convex optimization focuses on spaces equipped with a Riemannian metric. 
It generalizes convex optimization by focusing on Riemannian metrics rather than linear structures.
This field allows for the exploration of optimization landscapes that are inherently curved, moving beyond the linear structures typically characterized by Euclidean, Hilbert, and Banach spaces.

A function is geodesically convex ($g$-convex) if its behavior mirrors that of standard convex functions along geodesics. This generalization enables the optimization of functions that may not exhibit traditional convexity but still possess desirable properties along specific paths. As a result, geodesically convex optimization opens up new avenues for solving problems in non-Euclidean spaces, where traditional methods may falter.

To this end, we study optimization problems: 
\begin{align}
    \min_{x \in \mathcal{M}} f (x) 
\end{align}
where $ \M $ is a Hadamard manifold endowed with a Riemannian metric $g$. Over the past few years, many researchers have contributed to this study of this problem \citep{absil2008optimization,bacak2014convex,zhang2016first,bergmann2016parallel,lerkchaiyaphum2017iterative,bredies2020first,khan2020multistep,criscitiello2022negative,hirai2023convex,sakai2023convergence,hirai2024gradient}. In the seminal works of Bonnabel \citep{bonnabel2013stochastic} and  Zhang and Sra \citep{zhang2016first}, they build up an analysis framework that leverages the geodesic triangle defined by the current iterate $x_t$, the subsequent iterate $x_{t+1}$, and the optimal point $x^*$. 
% In the study of iterative algorithms on Riemannian manifolds, a crucial step involves understanding the geodesic triangle defined by the current iterate $x_t$, the subsequent iterate $x_{t+1}$, and the optimal point $x^*$. For gradient descent applied to geodesically convex functions, Zhang and Sra utilized the triangle comparison theorem to derive a result that establishes a relationship between these points. 
Specifically, they utilized the triangle comparison theorem of Toponogov and ingeniously proved the following proposition. 

\begin{proposition}[Corollary 8 in Zhang and Sra \citep{zhang2016first}]
    \label{thm:gd-compare}
    For any Riemannian manifold $\mathcal{M}$ where the sectional curvature is lower bounded by $\kappa$ and any point $x, x_s \in \M$ (such that $d(x,x_s)$ is less than the injectivity radius of $x_s$), the update $x_{s+1} = \Exp_{x_s}(-\eta_s \grad f(x_s))$ satisfies
    \begin{align} 
        \langle - \grad f(x_s), \mathrm{Exp}^{-1}_{x_s}(x) \rangle_{x_s} \leq \frac{1}{2\eta_s} \left( d^2(x_s,x) - d^2(x_{s+1},x) \right) + \frac{\zeta(\kappa,d(x_s,x))\eta_s}{2} \| \grad f(x_s) \|^2, \label{eq:critical}
    \end{align} 
    where $\zeta (\kappa, c) := \frac{\sqrt{|\kappa|}c}{\tanh(\sqrt{|\kappa|}c)} $. 
\end{proposition}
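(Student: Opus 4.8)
The plan is to read the left-hand side as the cosine term of a law of cosines and then bound the resulting distances by a trigonometric comparison valid under a lower sectional-curvature bound. Consider the geodesic triangle with vertices $x_s$, $x_{s+1}$, $x$. The hypothesis that $d(x,x_s)$ is below the injectivity radius of $x_s$ ensures that $\mathrm{Exp}^{-1}_{x_s}(x)$ is well defined and that the side from $x_s$ to $x$ is a genuine minimizing geodesic, so the triangle is non-degenerate. I would label the side lengths $b := d(x_s,x_{s+1}) = \eta_s\,\|\grad f(x_s)\|$, $c := d(x_s,x)$, and $a := d(x_{s+1},x)$, and let $A$ be the angle at $x_s$ between the initial directions of the geodesics toward $x_{s+1}$ and toward $x$.

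The first step rewrites the inner product geometrically. Because $x_{s+1} = \Exp_{x_s}(-\eta_s\grad f(x_s))$, the tangent vector $-\eta_s\grad f(x_s)$ equals $\mathrm{Exp}^{-1}_{x_s}(x_{s+1})$; hence $-\grad f(x_s)$ points along the geodesic from $x_s$ toward $x_{s+1}$ with norm $\|\grad f(x_s)\| = b/\eta_s$, while $\mathrm{Exp}^{-1}_{x_s}(x)$ points toward $x$ with norm $c$. The angle between the two is $A$, so by the definition of the metric inner product at $x_s$,
$$\langle -\grad f(x_s),\ \mathrm{Exp}^{-1}_{x_s}(x)\rangle_{x_s} = \frac{b\,c}{\eta_s}\cos A .$$

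The second, central, step is the distance comparison. By the hinge form of Toponogov's theorem, a geodesic triangle in a space of curvature bounded below by $\kappa$ has its third side no longer than the corresponding side of the comparison triangle in the model space of constant curvature $\kappa$; expanding the model-space (hyperbolic) law of cosines and bounding it yields the estimate
$$a^2 \le \zeta(\kappa,c)\,b^2 + c^2 - 2\,b\,c\cos A ,$$
where the correction $\zeta(\kappa,c) = \frac{\sqrt{|\kappa|}\,c}{\tanh(\sqrt{|\kappa|}\,c)}$ is attached to the squared gradient-step length $b^2$ and carries the argument $c = d(x_s,x)$. Combining this with the previous display gives $2\eta_s\langle -\grad f(x_s),\ \mathrm{Exp}^{-1}_{x_s}(x)\rangle_{x_s} = 2bc\cos A \le \zeta(\kappa,c)b^2 + c^2 - a^2$; dividing by $2\eta_s$ and substituting $b^2 = \eta_s^2\|\grad f(x_s)\|^2$, $c = d(x_s,x)$, and $a = d(x_{s+1},x)$ yields exactly the asserted inequality.

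I expect the entire difficulty to reside in the comparison estimate of the third step, and specifically in two points: deriving the sharp constant $\zeta(\kappa,c)$ from the hyperbolic law of cosines, and keeping the bookkeeping straight so that the curvature correction multiplies $b^2 = \eta_s^2\|\grad f(x_s)\|^2$ rather than $c^2$, with argument $d(x_s,x)$. A good consistency check is the flat limit $\kappa\to 0$, where $\zeta\to 1$ and the bound degenerates to the Euclidean law of cosines. An alternative to the explicit hinge-and-angle computation, closer to the paper's stated theme, is to encode the left-hand side through the quasilinearization (quasi-inner-product) formalism and bound it directly; but the curvature-dependent constant is the same obstruction either way, and everything outside the comparison step is routine.
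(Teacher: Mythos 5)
Your proposal reconstructs exactly the argument behind the cited result (the paper itself does not prove Proposition~\ref{thm:gd-compare}; it imports it from Zhang and Sra's Corollary~8): read the left-hand side as $bc\cos A/\eta_s$ via the hinge at $x_s$, invoke the comparison estimate $a^2 \le \zeta(\kappa,c)\,b^2 + c^2 - 2bc\cos A$ from Toponogov plus the hyperbolic law of cosines, and rearrange --- which is precisely how the original proof goes. The single step you leave as a black box, deriving that estimate with the correction $\zeta(\kappa,c)$ attached to $b^2$ and carrying the argument $c = d(x_s,x)$, is exactly Zhang--Sra's trigonometric distance bound (their Lemma~5); you have correctly isolated it as the only nontrivial content and kept all the bookkeeping (which side gets the correction, and with which argument) straight.
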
 

% In the work of Zhang and Sra and subsequent works, Theorem \ref{thm:gd-compare} is extensively used in the analysis of first-order methods for geodesically convex optimization. This crucial inequality (\ref{eq:critical}) has two advantages: 
% \begin{itemize} 
%     \item The left-hand-side of (\ref{eq:critical}) directly relates to geodesic convexity. 
%     \item The right-hand-side of (\ref{eq:critical}) can be telescoped. 
% \end{itemize} 

In the seminal works of Bonnabel \citep{bonnabel2013stochastic} and  Zhang and Sra \citep{zhang2016first}, as well as many subsequent studies \citep[e.g.,][]{zhang2016riemannian,weber2017frank,zhang2018estimate,tripuraneni2018averaging,sun2019escaping,lin2020accelerated,kim2022accelerated,alimisis2021momentum,kim2022accelerated,martinez2022global,sakai2023convergence,martinez2024convergence}, Proposition \ref{thm:gd-compare} and similar results have been extensively utilized in the analysis of first-order methods for geodesically convex optimization. This proposition is particularly significant due to following advantages: 

\begin{itemize} 
    \item The left-hand side of (\ref{eq:critical}) is inherently linked to the concept of geodesic convexity ($g$-convexity). 
    %This connection is crucial as it allows the inequality to align seamlessly with the geometric properties of the optimization problem on Riemannian manifolds, establishing a clear relationship between the algorithm's progress and the underlying convexity structure.
    This connection is crucial as it directly associates the algorithm's progress with the underlying convexity structure.
    While the right-hand side of (\ref{eq:critical}) possesses the property of telescoping, which is highly beneficial in the analysis of iterative algorithms. 
    %Telescoping facilitates the summation of inequalities across multiple iterations, leading to a straightforward derivation of convergence rates. This property streamlines the analysis and fosters a more intuitive understanding of how the algorithm advances toward the optimal solution.
    %Telescoping facilitates the summation over iterations and greatly simplifies the derivation of convergence rates.
\end{itemize} 

Nonetheless, (\ref{eq:critical}) also suffers limitations:
\begin{itemize}
    \item The right-hand side of (\ref{eq:critical}) incorporates the term $\zeta(\kappa,d(x_s,x)) $, which requires a lower bound on the sectional curvature as well as an upper bound on $ d(x_s,x) $. This requirement imposes two assumptions: \textbf{(A1)} a uniform lower bound on the sectional curvature, and \textbf{(A2)} an \emph{a priori} upper bound on the diameter of the trajectory $ \{ x_s \}_s $. 
    % \emph{a-priori}. 
\end{itemize} 

In particular, this Curvature Bounded Below (CBB) condition \textbf{(A1)} fails to hold for a large range of Hadamard manifolds. For example, warped product manifold and doubly warped product manifold can construct a large class of Hadamard manifolds with curvature tends to negative infinity \citep{bishop1969manifolds,petersen2006riemannian}.

\begin{example}
    Starting with a Riemannian manifold $(F, h)$, we consider a warped product manifold:
    \begin{align*}
        \M = \mathbf{I} \times_{\phi} F; \text{ with metric }
        g =dr^2+\phi(t)^2 h,
    \end{align*}
    where $\mathbf{I} \subset \mathbb{R}$ is an open interval and $\phi$ is a positive, differentiable function on $\mathbf{I}$. 
    %The warped product $\M$ is the manifold $\mathbf{I} \times F$ endowed with the metric
    % \begin{align*}
    %     g =dr^2+\phi(t)^2 h.
    % \end{align*}

    By Bishop and O’Neill \citep{bishop1969manifolds} (Lemma 7.5), the warped product $\M = \mathbf{I} \times_{\phi} F$ has non-positive curvature ($K \le 0$) if $\phi$ is convex and $dim(F)=1$. 
    %In this case, we make the additional assumption that $F$ has sectional curvature $0$. (error, sectional curvature no definition)
    In this case, the sectional curvature for a tangent plane $\Pi$ at $(t; p)$ is
    %In such cases, for a tangent plane $\Pi$ at $(t; p)$, the sectional curvature formula shows
    \begin{align*}
        K(\Pi) = -\frac{\phi^{\prime \prime}(t)}{\phi(t)}\|x\|^2+\frac{-{\phi^{\prime}}^2(t)}{\phi^2(t)}\|v\|^2,
    \end{align*}
     where $\|x\|^2+\|v\|^2=1$ ($x$ horizontal and $v$ vertical).

    Specifically, one can take $I = (0,1)$ and $ \phi (t) = t^2 $ to get a quick example manifold whose curvature lower bound goes to negative infinity. 
    % Consequently, one can easily construct suitable functions $\phi$ for which the curvature lower bound tends to negative infinity.
    \label{ex:boudly}
\end{example}

This raises a crucial question:

\begin{quote}
    \textit{Is it possible to remove both Assumption \textbf{(A1)} and Assumption \textbf{(A2)} while still maintaining the state-of-the-art convergence rate?} \textbf{(Q)}
\end{quote}

% Over the past decade, despite considerable efforts to tackle Question \textbf{(Q)} from various perspectives \citep[e.g.,][]{NIPS2017_6ef80bb2,tripuraneni2018averaging,10.5555/3692070.3693490}, no prior work has succeeded in resolving this question. 
Although Question \textbf{(Q)} has attracted significant attention over the past decade \citep[e.g.,][]{liu2017accelerated,tripuraneni2018averaging,zhang2018estimate,martinez2024convergence}, all attempts to resolve it have thus far fallen short. 
For instance, the work by Martínez-Rubio et al. \citep{martinez2024convergence} analyzes the proximal methods over Riemannian manifolds, and removes the requirement for Assumption \textbf{(A2)}, yet their analysis remains within the framework of Proposition \ref{thm:gd-compare} and still requires Assumption \textbf{(A1)}. At this point, it is evident that resolving Question \textbf{(Q)} poses a substantial challenge -- likely demanding a fundamentally new methodology that cannot rely on Proposition \ref{thm:gd-compare} or similar techniques. In this work, we provide an affirmative answer to Question \textbf{(Q)}, by establishing new convergence guarantees for optimization over Hadamard manifolds. 

Our work makes the following primary contributions, spanning both deterministic and stochastic $g$-convex optimization; Please see Table \ref{tab:compare-deter} for a detailed comparison to prior arts.
For deterministic optimization problems
%~\footnote[1]{The smoothness requirement can be relaxed; See Remark \ref{remark:relax} for details.}
: 
% ~\arabic{footnote}{ }:
\begin{itemize} 
    % \item If the objective function is strongly $g$-convex (geodesically strongly convex) and smooth, then the gradient descent algorithm achieves linear convergence, without requiring \textbf{(A1)} or \textbf{(A2)} or their alternatives. 
    % without any assumptions regarding a lower bound on curvature or a bounded domain. 
    \item If the objective function is $g$-convex (geodesically convex), then a proximal gradient algorithm achieves a $ \mathcal{O} (1/t) $ convergence rate, without requiring \textbf{(A1)} or \textbf{(A2)}. 
    % without any assumptions on curvature lower bounds or bounded domains. 
\end{itemize}

For stochastic optimization:
\begin{itemize} 
    % \item If the overall objective is strongly $g$-convex (geodesically strongly convex) and smooth, then the stochastic gradient descent algorithm achieves an $ O(1/t) $ convergence rate, without requiring \textbf{(A1)} or \textbf{(A2)} or their alternatives. 
    % without any assumptions regarding curvature lower bounds or bounded domains. 
    \item If the objective function is $g$-convex (geodesically convex) and smooth, then a stochastic proximal gradient algorithm achieves a $ \widetilde{O}(1/\sqrt{t}) $ convergence rate, without requiring \textbf{(A1)} or \textbf{(A2)}.  
    % without any assumptions on curvature lower bounds or bounded domains. 
\end{itemize} 

% Our results are proved via new techniques that may also be useful for more Riemannian optimization problems. 

Our results are established through new techniques that may be useful for a wider range of Riemannian optimization problems. Below, Tables \ref{tab:compare-deter} provides a comprehensive comparison of our work with state-of-the-art methods.
It shows the assumptions and convergence behaviors of existing approaches in the deterministic case. 
% Table \ref{tab:compare-stoc} presents analogous results in stochastic case.
% These tables reveal that our method eliminates the demand for curvature lower bound and bounded domain entirely. This allows our framework to operate on arbitrary Hadamard manifolds.
As shown in the table, our algorithm attains state-of-the-art convergence rates for $g$-convex optimization while relying on weaker assumptions. 

\begin{table}[h!] 
    \centering 
    \begin{tabular}{c|c|c|c|c} 
    \toprule \hline 
        % \multicolumn{5}{c}{\makecell{ \textbf{Strongly $g$-convex objectives, in deterministic environments}}} \\ \hline 
        % & \makecell{Need Curvature\\Lower Bound?} & \makecell{Need Bounded \\Domain?} & \makecell{Need Solving\\Eqn. Involving\\$\Exp^{-1}$ \& $\Gamma $?} & \makecell{Convergence\\Rate} \\ \hline 
    % \citet{zhang2016first} & Yes & Yes & No & Linear \\ \hline 
    % \rowcolor{lightgray}\citet{liu2017accelerated} & No & No & Yes & Linear${}^\dagger$ \\ \hline 
    % \citet{tripuraneni2018averaging} & ----- & ----- & ----- & ----- \\ \hline 
    % \rowcolor{lightgray} \citet{zhang2018estimate} & Yes & Yes & No & Linear \\ \hline 
    % \citet{becigneul2018riemannian} & ----- & ----- & ----- & ------ \\ \hline 
    % \citet{ferreira2019gradient} & ----- & ----- & ----- & ------ \\ \hline 
    % \rowcolor{lightgray} \citet{kim2022accelerated} & Yes & Yes & No & Linear \\ \hline 
    % \rowcolor{lightgray}
    % \citet{jin2022understanding} & Yes & No & No & Linear \\ \hline 
    % \citet{martinez2024convergence} & Yes & No & No & Linear \\ \hline 
    % \textbf{This Work (Thm. \ref{thm:strong-conv})}  & \textbf{No} & \textbf{No} & \textbf{No} & \textbf{Linear} \\ 
    % \toprule \hline 
        \multicolumn{5}{c}{\makecell{ \textbf{$g$-convex objectives, in deterministic environments}}} \\ \hline 
        & \makecell{Need Curvature\\Lower Bound?} & \makecell{Need Bounded \\Domain a-priori?} & \makecell{Need Solving\\Eqn. Involving\\$\Exp^{-1}$ \& $\Gamma $?} & \makecell{Convergence\\Rate} \\ \hline 
    Zhang and Sra \citep{zhang2016first} & Yes & Yes & No & $\mathcal{O}(t^{-1})$ \\ \hline 
    \rowcolor{lightgray} Liu et al. \citep{liu2017accelerated} & No & Yes & Yes & $\mathcal{O}^\dagger (t^{-2})$ \\ \hline 
    %\citet{tripuraneni2018averaging} & ----- & ----- & ----- & ----- \\ \hline 
    %\rowcolor{lightgray}\citet{zhang2018estimate} & ----- & ----- & ----- & ----- \\ \hline 
    %\citet{becigneul2018riemannian} & ----- & ----- & ----- & ------ \\ \hline 
    Ferreira et al. \citep{ferreira2019gradient} & Yes & No & No & $\mathcal{O} (t^{-1})$ \\ \hline 
    \rowcolor{lightgray} Kim and Yang \citep{kim2022accelerated} & Yes & Yes & No & $\mathcal{O} (t^{-2})$ \\ \hline 
    % \rowcolor{lightgray}
    % %\citet{jin2022understanding} & ----- & ----- & ----- & ----- \\ \hline 
    % \makecell{Martínez-Rubio et al.\\ \citep[RIPPA in][]{martinez2024convergence}} & Yes & No & No & $\mathcal{O} (t^{-1})$ \\ \hline 
    \makecell{Martínez-Rubio et al. \citep{martinez2024convergence}} & Yes & No & No & $\mathcal{O} (t^{-1})$ \\ \hline 
    \textbf{This Work (Thm. \ref{thm:prox})}  & \textbf{No} & \textbf{No} & \textbf{No} & $\mathcal{O} (t^{-1} )$ \\ \hline \bottomrule
    \end{tabular} 
    \caption{Comparison to state-of-the-art results for {first-order methods} on $g$-convex optimization problems over Hadamard manifolds, in deterministic environments. 
    The light gray rows in this table highlight acceleration methods, which are outside the primary focus of the this work. 
    % In this table, the rows shaded in lightgray marks acceleration methods, which is not the focus of this work. 
    % In this table, the convergence rates annotated with ${}^\dagger$ indicates that solving an additional equation in each iteration is required, which may incur extra computational complexity; 
    % In this table, the convergence rates marked with ${}^\dagger$ indicate that solving an additional equation per iteration is required, potentially incurring additional computational complexity; 
    The convergence rates marked with ${}^\dagger$ indicate that it is necessary to solve an extra equation in each iteration, which may incur extra computational complexity; Such requirement is considered computationally intractable by some authors \citep{kim2022accelerated}. 
    %Cells marked with ``-----'' indicates that no results available. 
    } 
    \label{tab:compare-deter}
\end{table}

\subsection{Prior Arts}
Convex optimization techniques over manifolds have been a central topic in contemporary optimization. This focus arises from that many optimization problems are posed on manifolds, such as geometric models for the human spine \citep{adler2002newton}, eigenvalue optimization problems \citep{absil2008optimization}, and so on.

This development drives the need to expand algorithms from Euclidean spaces to Riemannian manifolds. Indeed, some important methodologies on linear spaces such as gradient descent, Newton’s method, the proximal point method, and so on have been adapted to Riemannian manifolds 
%subdifferentials, the conjugate gradient method, 
\citep{bonnabel2013stochastic,newton2018stochastic,adler2002newton,azagra2005nonsmooth,bento2012subgradient,dedieu2003newton,ledyaev2007nonsmooth,li2009monotone,li2009existence,nemeth2003variational,smith1994optimization,udriste1994convex,liu2017accelerated,ferreira2019gradient,jin2022understanding}. Nevertheless, numerous algorithms remain worthy of deeper study. 
% Yet many algorithms still merit deeper investigation. 

% only offer adaptations to Riemannian manifolds via straightforward analysis. The development of these methods on Hadamard manifolds merit deeper and more comprehensive investigation. 

Hadamard manifold -- a (simply connected) Hadamard space (complete CAT(0) spaces) equipped with a Riemannian metric -- is an important class of nonlinear Riemannian manifolds \citep{bacak2014convex,ballmann1995lectures,ballmann1985manifolds,bridson1999metric}. 
% It comprises Hilbert spaces, nonlinear Lebesgue spaces and many other spaces. 
The properties of Hadamard manifolds have been extensively studied and have long been a focal point of interest in geometric analysis \citep{bavcak2023old}.
The concept of Hadamard spaces originated with Wald \citep{wald1936begrundung} and was advanced by Aleksandrov \citep{aleksandrov1951theorem}, who introduced the name ``Aleksandrov spaces of nonpositive curvature''. Later, Gromov introduced the terminology CAT(0).
% The concept of Hadamard spaces originated in a 1936 paper by \cite{wald1936begrundung}. Its significance gained prominence through Aleksandrov's pivotal contributions in the 1950s \citep{aleksandrov1951theorem}, leading to the designation ``Aleksandrov spaces of nonpositive curvature''. Gromov later introduced the terminology CAT(0). Since then, Hadamard spaces have been alternatively called complete CAT(0) spaces.
In 2008, Berg and Nikolaev \citep{Berg2008} investigated the curvature properties of Aleksandrov spaces via quasilinearization techniques.

% Convex optimization theory in Hadamard manifolds is a promising research frontier, as problems in various fields have been formulated via geodesically convex optimization on Hadamard spaces \citep{absil2008optimization,agueh2011barycenters,hirai2023convex}. 
% Examples include phylogenetic tree analysis and submodular minimization on modular lattices \citep{billera2001geometry,hamada2017maximum}.

%For instance, \cite{billera2001geometry} endowed biological phylogenetic trees with CAT(0) cubical complex structures. Also, \cite{hamada2017maximum} demonstrated the efficacy of Hadamard space methodologies for minimizing submodular functions over modular lattices. 
%Convex optimization theory gains further impetus from the following phenomenon: inherently non-convex optimization problems may become convex when reformulated through a proper metric \citep{absil2008optimization,agueh2011barycenters}. 

The analysis of convex function optimization over Hadamard manifolds plays a pivotal role in computational geometric analysis \citep{jost1995convex,jost1997nonpositive}. Some efforts have focused on extending results from the Euclidean spaces to Hadamard manifolds \citep{wang2011modified,ardila2014moving,bacak2014convex,bacak2014new,bacak2015convergence,bacak2016lipschitz,banert2014backward,bergmann2016parallel,lerkchaiyaphum2017iterative,huang2019riemannian,bredies2020first,bergmann2024difference}. For example, building on proximal point algorithm, Khan and Cholamjiak \citep{khan2020multistep} proposed a multi-step approximant.
%for convex optimization problem in Hadamard spaces. 
Hamilton and Moitra \citep{hamilton2021no}; Criscitiello and Boumal \citep{criscitiello2022negative} showed the impossibility to accelerate any deterministic first-order algorithm for a large class of Hadamard manifolds. 

Recent years have witnessed substantial advances in analyzing gradient descent algorithms on Hadamard manifolds \citep[e.g.,][]{zhang2016riemannian,weber2017frank,zhang2018estimate,tripuraneni2018averaging,sun2019escaping,lin2020accelerated,kim2022accelerated,alimisis2021momentum,kim2022accelerated,martinez2022global,sakai2023convergence,martinez2024convergence,sakai2023convergence,hirai2024gradient}. Among these, Zhang and Sra \citep{zhang2016first} offered the most relevant study of first-order optimization in this setting. However, the existing works rely on restrictive assumptions about the functions or manifolds, or limits its scope to specialized subproblems -- highlighting the need for more general analysis frameworks. 

This paper is organized as follows. Section \ref{section:preliminary} introduces the necessary concepts and quasilinearization. Sections \ref{section:method} is devoted to the algorithms and results in the deterministic and stochastic cases. Additional discussions on proximal operator is included in Section \ref{section:proximal} .

\section{Quasilinearization of Hadamard Manifolds}
\label{section:preliminary}
This section introduces the necessary notation for analyzing manifolds and functions defined on them. We also formalize the concept of quasilinearization \citep{Berg2008}, a key tool for studying geometric properties of Hadamard manifolds.

\subsection{Hadamard manifolds and Notations}
% In this paper, we focus on Hadamard manifolds (or Cartan--Hadamard manifolds). 
Hadamard manifolds (or Cartan--Hadamard manifolds) are complete and simply connected Riemannian manifolds with everywhere non-positive sectional curvature \citep{sakai1996riemannian,lee2006riemannian}. Hadamard manifolds encompass not only Euclidean spaces but also more complex geometries, such as spaces with constant negative curvature (e.g., hyperbolic spaces) and those with variable non-positive curvature (e.g., the space of symmetric positive definite matrices). 

% All Riemannian manifold considered in this paper will be Hadamard manifold and will be denoted as $\M$.

A Hadamard manifold possesses several key properties about its geometric structure. First, by the Hopf--Rinow theorem, it is geodesically complete, meaning that every geodesic can be extended indefinitely. Futhermore, for any two points in the Hadamard manifold, there exists a unique geodesic connecting them. Additionally, the Cartan--Hadamard theorem ensures that Hadamard manifolds are diffeomorphic to some Euclidean space, and the exponential map at any point is bijective.

Let $( \M , g )$ be an Hadamard manifold endowed with a Riemannian metric $g$, and let $d$ be the distance metric over $\M$ associated with $g$. 
We use the following notations. 
For any $\x \in \M$, $T_{\x}\M$ denotes the tangent space to $\M$ at $\x$. Riemannian metric $g$ derives metric $g_{\x}$ on $T_{\x}\M$, which is compatible with $g$. We use $ \< \cdot, \cdot \>_\x $ to denote the inner product, and $ \| \cdot \|_\x $ to denote the norm on $T_{\x}\M$. The subscript of $\< \cdot, \cdot \>_\x$ cannot be ignored, as $ \< \cdot ,\cdot \> $ denotes the quasilinearized inner product (defined in Section \ref{sec:quasi}). Obviously, $(T_{\x}\M, g_{\x})$ is a Riemannian manifold that has constant sectional curvature 0.

Also, for any $\x \in \M$ and $\v \in T_\x \M$, there exists a unique geodesic $\gamma_{\v} $ through $\x$ whose tangent at $\x$ is $\v$. The exponential map $\Exp_\x : T_{\x}\M \rightarrow \M$ is defined by $ \Exp_\x (\tau \v) = \gamma_{\v} (\tau) $ ($ \tau \in [0,1] $) for all $\v \in T_{\x}\M$.
For $ \x, \y \in \M $, we use $ \oa{\x\y} $ to denote the ordered shortest geodesic segment from $\x$ to $\y$. We refer to $\x$ (resp. $\y$) as the start point (resp. end point) of the geodesic segment $\oa{\x\y}$, and use $ |\oa{\x\y}| := d (\x, \y) $ to denote the length of $ \oa{\x\y}$.

For any $\x, \y \in \M$, $\Gamma_{\y}^{\x}$ denotes the parallel transport from $T_\y \M$ to $T_\x \M$ along the minimizing geodesic. For any $\v \in T_\x \M$ and $\w \in T_\y \M$, parallel transport operation can “transport” $\v$ to $\w$. Then the norm of difference between two vectors from two tangent spaces is $\| \v - \Gamma_{\y}^{\x} \w \|_{\x}$.

%\begin{remark}
%    If $\M$ is a Riemannian manifolds, each tangent manifold $T_\x \M$ ($\x \in \M$) is naturally endowed with an inner product, which is compatible with the Riemannian metric $g$. We use $ \< \cdot, \cdot \>_\x $ to denote the inner product in $ T_\x \M $, and use $ \| \cdot \|_\x $ to denote the norm in this manifold. It is worth noting that this inner product $ \< \cdot, \cdot \>_\x $ is different from the quasilinearized inner product $ \< \cdot ,\cdot \> $. 
%\end{remark} 

\begin{remark}
    Throughout the rest of the paper, we use $g$ to denote the Riemannian metric, and use $d$ to denote to distance metric induced by $g$. Sometimes $d$ is simply referred to as metric. 
    % metric induced by the Riemannian length. 
\end{remark}

\begin{remark}
    When the expression is unambiguous, sometimes we use abbreviated notation for simplicity: We may omit the subscript $\x$ in the norm --- writing $\| \cdot \| := \| \cdot \|_{\x}$ when there is no confusion. 
    % And we typically omit the notation for parallel transport, meaning $\| \v - \w \|$ can represent $\| \v - \Gamma_{\y}^{\x} \w \|_{\x}$, when $\v \in T_\x \M$, $\w \in T_\y \M$.
\end{remark}

\subsection{Quasilinearization}
\label{sec:quasi}

In answering a question of Gromov \citep{gromov1999metric}, Berg and Nikolaev \citep{Berg2008} invented the notion of quasilinearization, which generalizes inner products from Euclidean manifolds to Hadamard manifolds. Now we introduce the notion of quasilinearization to optimization problems. 

Quasilinearization plays a pivotal role in our work. 
Existing techniques, such as the triangle comparison trick employed by Zhang and Sra \citep{zhang2016first}, require a curvature lower bound. In contrast, quasilinearization properties hold universally across all Hadamard manifolds, regardless of whether a curvature lower bound is assumed. 
% While our algorithm operates on any Hadamard manifold (even those with unbounded curvature), 
% For previous the lack of curvature constraints prevents the use of generalized cosine formula or related triangle inequalities. 
% While existing techniques, such as the triangle comparison trick used by \citet{zhang2016first}, requires a curvature lower bound, 
% % Nevertheless, 
% the properties of quasilinearization remain universally valid across all Hadamard manifolds, regardless of whether a curvature lower bound is assumed.

\begin{definition}[quasilinearized inner product]
    \label{def:quail-inner-prod}
    For any two ordered geodesic segments $\oa{\x\y}$ and $ \oa{\z\w} $ on manifold $\mathcal{M}$, the quasilinearized inner product is defined as 
\begin{align} 
    \< \oa{\x\y}, \oa{\z\w} \> 
    =
    |\oa{\x\y}| |\oa{\z\w} | \cosq \( \oa{\x\y}, \oa{\z\w} \) , \label{eq:def-inner}
\end{align}
where $ |\oa{\x\y}| $ is the length of $ \oa{\x\y} $, and 
\begin{align*}
    \cosq \( \oa{\x\y}, \oa{\z\w} \) 
    = 
    \frac{ | \oa{\x\w} |^2 + | \oa{\y \z} |^2 - | \oa{\x \z} |^2 - | \oa{\y \w} |^2 }{ 2 | \oa{\x\y} | | \oa{\z\w} | } . 
\end{align*} 
\end{definition}

This definition of the quasilinearized inner product can be applied to any two geodesic segments, even if they don't share a same end point. This quasilinearized inner product's magnitude is determined by the distances between the four points $\x, \y, \z$ and $\w$,
\begin{align*}
    \< \oa{\x\y}, \oa{\z\w} \> 
    =
    \frac{ | \oa{\x\w} |^2 + | \oa{\y \z} |^2 - | \oa{\x \z} |^2 - | \oa{\y \w} |^2 }{ 2 } . 
\end{align*}
In particular, if two of the points overlap, the expression simplifies to:
\begin{align*}
    \< \oa{\x\y}, \oa{\x\w} \> 
    =
    \frac{ | \oa{\x \y} |^2 + | \oa{\x\w} |^2 - | \oa{\y \w} |^2 }{ 2 } . 
\end{align*}

An intriguing attribute of this quasilinearized inner product is that it is compatible with an ``addition'' rule, which we describe now. For any three points $ \x, \y, \z $, we define $ \oa{\x\y} + \oa{\y\z} = \oa{\x\z} $. With this notion of addition, the quasilinearied inner product (\ref{eq:def-inner}) satisfies 
\begin{align*} 
    \< \oa{\x\y} + \oa{\y\z} , \oa{\u\w} \> = \< \oa{\x\y} , \oa{\u\w} \> + \< \oa{\y\z} , \oa{\u\w} \> . 
\end{align*} 

\begin{remark}
    \label{rem:add}
    The addition operation ``+'' between two geodesic line segments (e.g., $ \oa{\x\y} + \oa{\y\z}$) is well-defined only when the end point of the first operand coincides with the start point of the second operand. We say two geodesic line segments are addable when the addition operation ``+'' between them is well-defined. 
\end{remark} 

This quasilinearized inner product satisfies some additional properties, which are listed below in Proposition \ref{prop:qip}. 
% is that it follows the linear law whenever two ordered geodesics are 

\begin{proposition}[Berg and Nikolaev  \citep{Berg2008}] 
    \label{prop:qip}
    A quasilinearized inner product (\ref{eq:def-inner}) on a Hadamard space $\M $ with distance metric $d$ satisfies the following conditions:
    \begin{itemize}
        \item[\textbullet] $\< \oa{\x\y} , \oa{\x\y} \> = d^2 (\x,\y) , \quad \forall \x,\y \in \M $; 
        \item[\textbullet] $\< \oa{\x\y} , \oa{\z\w} \> = \<  \oa{\z\w} , \oa{\x\y} \> , \quad \forall \x,\y, \z, \w \in \M $; 
        \item[\textbullet] $\< \oa{\x\y} , \oa{\z\w} \> = - \< \oa{\y\x} , \oa{\z\w} \> , \quad \forall \x,\y, \z, \w \in \M $; 
        \item[\textbullet] $\< \oa{\x\y} + \oa{\y\z} , \oa{\u\w} \> = \< \oa{\x\y} , \oa{\u\w} \> + \< \oa{\y\z} , \oa{\u\w} \> $ for any $ \x,\y,\x,\u, \w \in \M $, where the addition operation is defined as $ \oa{\x\y} + \oa{\y\z} = \oa{\x\z} $ for any $ \x,\y,\z \in \M $. 
    \end{itemize}
\end{proposition}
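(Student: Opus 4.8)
The plan is to verify all four identities by direct substitution into the defining formula
\begin{align*}
    \langle \oa{\x\y}, \oa{\z\w} \rangle = \frac{ |\oa{\x\w}|^2 + |\oa{\y\z}|^2 - |\oa{\x\z}|^2 - |\oa{\y\w}|^2 }{2},
\end{align*}
using only two elementary facts about the distance metric $d$: its symmetry $|\oa{\x\y}| = |\oa{\y\x}| = d(\x,\y)$, and $d(\x,\x)=0$. It is worth emphasizing at the outset that none of the four identities actually invokes the Hadamard (or even the CAT(0)) structure of $\M$; they are purely algebraic consequences of Definition \ref{def:quail-inner-prod} and of $d$ being a metric. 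The curvature hypotheses enter only for the deeper facts about quasilinearization, such as the Cauchy--Schwarz inequality and compatibility with the Riemannian inner product, neither of which is asserted here. Thus the proof is essentially bookkeeping; the only point requiring a genuine (if small) observation is additivity, which I would save for last.

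The first three identities follow by term-matching. For the first, substituting $\z=\x$ and $\w=\y$ makes the two ``diagonal'' terms $|\oa{\x\x}|^2$ and $|\oa{\y\y}|^2$ vanish, while the two surviving terms each equal $d^2(\x,\y)$, giving $\langle \oa{\x\y}, \oa{\x\y}\rangle = d^2(\x,\y)$. For symmetry, I would write out $\langle \oa{\z\w}, \oa{\x\y}\rangle$ and identify each of its four distance terms with the corresponding term of $\langle \oa{\x\y}, \oa{\z\w}\rangle$ using $d(a,b)=d(b,a)$; the two expansions are literally equal. For the reversal identity, interchanging $\x$ and $\y$ in the first argument permutes the four terms of the numerator so that the expression flips sign as a block, whence $\langle \oa{\y\x}, \oa{\z\w}\rangle = -\langle \oa{\x\y}, \oa{\z\w}\rangle$.

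The additivity property is the only one needing more than inspection. By the convention $\oa{\x\y}+\oa{\y\z}=\oa{\x\z}$, the left-hand side is $\langle \oa{\x\z}, \oa{\u\w}\rangle$, which expands to $\tfrac{1}{2}\big(|\oa{\x\w}|^2 + |\oa{\z\u}|^2 - |\oa{\x\u}|^2 - |\oa{\z\w}|^2\big)$. Expanding the two summands $\langle \oa{\x\y}, \oa{\u\w}\rangle$ and $\langle \oa{\y\z}, \oa{\u\w}\rangle$ on the right and adding them, the four terms carrying the shared point $\y$ --- namely $|\oa{\y\u}|^2$ and $|\oa{\y\w}|^2$, each occurring once with each sign --- cancel exactly, and the remainder coincides termwise with the expansion of $\langle \oa{\x\z}, \oa{\u\w}\rangle$. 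The point worth flagging is that this cancellation is an exact identity valid in any metric space, so additivity holds with equality rather than merely as a one-sided bound; since the other three identities are immediate, this would complete the verification.
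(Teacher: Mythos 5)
Your verification is correct. All four identities follow, exactly as you show, from the four-point expansion
$\langle \oa{\x\y}, \oa{\z\w} \rangle = \tfrac{1}{2}\left( |\oa{\x\w}|^2 + |\oa{\y\z}|^2 - |\oa{\x\z}|^2 - |\oa{\y\w}|^2 \right)$,
using only symmetry of $d$ and $d(\x,\x)=0$; in particular your additivity computation, where the terms $|\oa{\y\u}|^2$ and $|\oa{\y\w}|^2$ each appear once with each sign and cancel exactly, is the right and complete argument. Note, however, that the paper itself offers no proof of this proposition at all: it is stated as a citation to Berg and Nikolaev, and the authors take it as known. So your contribution is a genuinely self-contained alternative to deferring to the literature, and your flagged observation is the most valuable part of it: these four identities are purely algebraic consequences of the definition and hold in any metric space, whereas the Hadamard (CAT(0)) hypothesis is what the paper actually needs for the deeper, one-sided facts --- the comparison inequality of Lemma \ref{lem:compare}, and, in Berg--Nikolaev's work, the Cauchy--Schwarz inequality for quasilinearization, which characterizes CAT(0) spaces. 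Distinguishing the "free" algebraic layer from the curvature-dependent layer is exactly the right way to read this proposition, and it is a distinction the paper leaves implicit.
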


An immediate property for the quasilinearized inner product is in Lemma \ref{lem:compare}, which is a consequence of the triangle comparison theorem of Toponogov; A simple comparison is presented below in Lemma \ref{lem:triangle-compare}. 

% The comparison theorems, that of Toponogov and that of \citet{rauch1951contribution}, yield a comparison of the lengths along geodesics in different Riemannian manifolds under suitable conditions.
% Applying Toponogov's comparison theorem, we can compare Hadamard manifold with its tangent space (\cite{cheeger1975comparison}). The two Riemannian manifolds satisfy the theorem's conditions, including initial condition and curvature's hypothesis and nonexistence of conjugate points. As a consequence, we get comparison about geodesic triangles in Hadamard manifold. 

\begin{lemma}[Triangle Comparison for Hadamard Manifolds]
    \label{lem:triangle-compare}
    Let $(\M; g)$ be a Hadamard manifold with non-positive sectional curvature. Then for any $\x \in \M$, any $\v_1 \in T_{\x}\M$ and $\v_2 \in T_{\x}\M$, one has
    \begin{align*}
        \| \v_1 - \v_2 \|_{\x} \le | \oa{\Exp_{\x}(\v_1) \Exp_{\x}(\v_2)} |.
    \end{align*}
\end{lemma}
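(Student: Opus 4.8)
The plan is to realize the two tangent vectors as the sides of a geodesic hinge based at $\x$ and then invoke the triangle comparison theorem (the $\mathrm{CAT}(0)$ / Toponogov comparison for non-positive curvature). Set $p := \Exp_{\x}(\v_1)$ and $q := \Exp_{\x}(\v_2)$, and consider the geodesic triangle with vertices $\x, p, q$. By the definition of the exponential map, the geodesic $\oa{\x p}$ leaves $\x$ with initial velocity $\v_1$ and has length $|\oa{\x p}| = \|\v_1\|_{\x}$; likewise $|\oa{\x q}| = \|\v_2\|_{\x}$. Consequently the angle at $\x$ subtended by the two sides $\oa{\x p}$ and $\oa{\x q}$ is, by definition, the $g_{\x}$-angle $\theta$ between the velocities $\v_1$ and $\v_2$, so that $\cos\theta = \langle \v_1, \v_2\rangle_{\x}\,/\,(\|\v_1\|_{\x}\|\v_2\|_{\x})$.

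Next I would build the Euclidean model hinge $\bar{\x}, \bar{p}, \bar{q}$ in $\R^2$ with the same two side lengths $\|\v_1\|_{\x}, \|\v_2\|_{\x}$ emanating from $\bar{\x}$ and the same included angle $\theta$. The Euclidean law of cosines then gives $|\bar{p}\,\bar{q}|^2 = \|\v_1\|_{\x}^2 + \|\v_2\|_{\x}^2 - 2\|\v_1\|_{\x}\|\v_2\|_{\x}\cos\theta = \|\v_1 - \v_2\|_{\x}^2$, the last equality being the expansion of the flat inner product on the model space $(T_{\x}\M, g_{\x})$. The comparison theorem for manifolds of non-positive sectional curvature states that, for a fixed hinge (two side lengths and their included angle held equal to the model), the side opposite the hinge is no shorter in $\M$ than in $\R^2$, i.e.\ $|\oa{pq}| \ge |\bar{p}\,\bar{q}|$. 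Chaining the two relations yields $|\oa{\Exp_{\x}(\v_1)\Exp_{\x}(\v_2)}| = |\oa{pq}| \ge \|\v_1 - \v_2\|_{\x}$, which is exactly the claim. As an alternative route I would note that the same bound follows from the Rauch comparison theorem, which shows that under non-positive curvature the differential $d(\Exp_{\x})$ is norm-nondecreasing; integrating along any path from $\v_1$ to $\v_2$ in the flat space $T_{\x}\M$ shows $\Exp_{\x}$ is distance-nondecreasing, again giving $d(p,q) \ge \|\v_1 - \v_2\|_{\x}$.

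The only delicate point is orienting the comparison inequality correctly: one must verify that non-positive curvature makes the side opposite a fixed hinge angle \emph{at least} as long as in the Euclidean model, rather than the reverse (the reverse being what a curvature \emph{lower} bound would give). Because the two sides at $\x$ and the included angle $\theta$ are forced to match the model exactly, this reduces cleanly to the standard hinge form of the comparison theorem, and no curvature lower bound enters anywhere in the argument. This is precisely the reason the estimate holds on \emph{every} Hadamard manifold, and it is what makes Lemma~\ref{lem:triangle-compare} a suitable foundation for the quasilinearization estimates that follow.
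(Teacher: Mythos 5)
Your proof is correct and follows essentially the same route as the paper: the paper's one-line proof applies Toponogov's comparison theorem to $(\M, g)$ and the flat model $(T_{\x}\M, g_{\x})$, which is precisely the hinge comparison you spell out, with the Euclidean law of cosines identifying the model hinge's opposite side with $\| \v_1 - \v_2 \|_{\x}$. You also orient the inequality correctly (non-positive curvature makes the side opposite the hinge no shorter than in the flat model), which is the only place the argument could have gone wrong.
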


\begin{proof}
    Apply Toponogov's comparison theorem to $(\M, g)$ and $( T_{\x}\M ,g_{\x})$.
\end{proof}

\begin{lemma} 
    \label{lem:compare} 
    Let $ (\M, g) $ be a Hadamard manifold with distance metric $d$. Then it holds that 
    \begin{align*} 
        \< \oa{\x\y} , \oa{\x\z} \> \le \< \Exp_{\x}^{-1} \( \y \) , \Exp_{\x}^{-1} \(\z\) \>_\x , \quad \forall \x, \y, \z \in \M . 
    \end{align*} 
\end{lemma}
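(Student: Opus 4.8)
The plan is to expand both sides through the law of cosines and then collapse the claim onto the triangle comparison already established in Lemma~\ref{lem:triangle-compare}. Write $\v_1 := \Exp_{\x}^{-1}(\y)$ and $\v_2 := \Exp_{\x}^{-1}(\z)$, so that $\y = \Exp_\x(\v_1)$ and $\z = \Exp_\x(\v_2)$. Since the exponential map of a Hadamard manifold is a radial isometry, the tangent norms recover geodesic distances: $\|\v_1\|_\x = d(\x,\y)$ and $\|\v_2\|_\x = d(\x,\z)$. These are the only geometric inputs I need beyond Lemma~\ref{lem:triangle-compare}.

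First I would rewrite the right-hand side. Because $(T_\x\M, g_\x)$ is a genuine inner-product space (constant sectional curvature $0$), polarization applies with equality:
\begin{align*}
    \< \v_1, \v_2 \>_\x = \frac{1}{2}\( \|\v_1\|_\x^2 + \|\v_2\|_\x^2 - \|\v_1 - \v_2\|_\x^2 \) = \frac{1}{2}\( d^2(\x,\y) + d^2(\x,\z) - \|\v_1 - \v_2\|_\x^2 \).
\end{align*}
Next I would rewrite the left-hand side using the shared-endpoint special case of the quasilinearized inner product recorded just after Definition~\ref{def:quail-inner-prod}:
\begin{align*}
    \< \oa{\x\y}, \oa{\x\z} \> = \frac{1}{2}\( d^2(\x,\y) + d^2(\x,\z) - d^2(\y,\z) \).
\end{align*}
Comparing the two expressions, the common terms $d^2(\x,\y)$ and $d^2(\x,\z)$ cancel, and the target inequality $\< \oa{\x\y}, \oa{\x\z} \> \le \< \v_1, \v_2 \>_\x$ becomes equivalent to
\begin{align*}
    \|\v_1 - \v_2\|_\x \le d(\y,\z).
\end{align*}

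Finally, since $d(\y,\z) = |\oa{\Exp_\x(\v_1)\,\Exp_\x(\v_2)}|$, this reduced inequality is exactly Lemma~\ref{lem:triangle-compare}, which holds on every Hadamard manifold with no curvature lower bound. I do not expect a genuine obstacle here: the argument is a two-line reduction once the law of cosines is applied on both sides. The only point demanding care is conceptual rather than computational, namely that polarization holds \emph{with equality} on the flat tangent space $(T_\x\M, g_\x)$, whereas on the manifold only the comparison \emph{inequality} $\|\v_1-\v_2\|_\x \le d(\y,\z)$ is available; it is precisely the non-positive curvature, entering through Toponogov's theorem in Lemma~\ref{lem:triangle-compare}, that makes the inequality point in the asserted direction.
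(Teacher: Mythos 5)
Your proof is correct and follows essentially the same route as the paper's: both reduce the claim, via the law of cosines in the flat tangent space (your polarization identity), to the comparison inequality $\| \v_1 - \v_2 \|_\x \le d(\y,\z)$ of Lemma~\ref{lem:triangle-compare}. The only cosmetic difference is that you compare the inner products directly rather than the normalized cosines, which incidentally sidesteps the degenerate case $\x \in \{\y, \z\}$ where the cosine expressions are undefined.
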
 

\begin{proof}
    By definition of the quasilinearized inner product, it suffices to prove 
    \begin{align}
        \cosq \( \oa{\x\y} , \oa{\x\z} \) 
        \le 
        \cos \measuredangle \( \x ; {\y}, \z \), \label{eq:target-0}
    \end{align}
    where $ \measuredangle \( \x ; {\y}, \z \) $ is the angle in $T_\x \M$ whose two sides map to $ \oa{\x\y} $ and $ \oa{ \x \z  } $ via the exponential map (at $\x$). Since $ T_y\M $ is flat, the law of cosine in Euclidean space gives 
    \begin{align} 
        \cos \measuredangle \( \x ; {\y}, \z \)
        = 
        \frac{ \| \Exp_{\x}^{-1} (\y) \|_\x^2 + \| \Exp_{\x}^{-1} (\z ) \|_\x^2 - \| \v \|_\x^2  }{ 2 \| \Exp_{\x}^{-1} (\y) \|_\x \| \Exp_{\x}^{-1} (\z ) \|_\x } , \label{eq:pause-0}
    \end{align} 
    where $ \v = \Exp_{\x}^{-1} \(\y\) - \Exp_{\x}^{-1} \(\z\) $, and the minus operation is defined in $T_\y \M$. By the triangle comparison Lemma \ref{lem:triangle-compare}, we know 
    \begin{align}
        \| \v \|_\x \le | \oa{ \y \z } |. 
        \label{eq:comp-0}
    \end{align}

    Plugging (\ref{eq:comp-0}) into (\ref{eq:pause-0}) gives 
    \begin{align*} 
        \cos \measuredangle \( \x ; {\y}, \z \) 
        =& \;  
        \frac{ \| \Exp_{\x}^{-1} (\y) \|_\x^2 + \| \Exp_{\x}^{-1} (\z ) \|_\x^2 - \| \v \|_\x^2  }{ 2 \| \Exp_{\x}^{-1} (\y) \|_\x \| \Exp_{\x}^{-1} (\z ) \|_\x } \\ 
        \ge& \;  
        \frac{ \| \Exp_{\x}^{-1} (\y) \|_\x^2 + \| \Exp_{\x}^{-1} (\z ) \|_\x^2 - | \oa{\y \z} |^2  }{ 2 \| \Exp_{\x}^{-1} (\y) \|_\x \| \Exp_{\x}^{-1} (\z ) \|_\x } \\ 
        =& \; 
        \frac{ | \oa{\x \y} |^2 + | \oa{\x \z} |^2 - | \oa{\y \z} |^2  }{ 2 | \oa{\x \y} | | \oa{\x \z} | } \\ 
        =& \; 
        \cosq \( \oa{\x\y} , \oa{\x\z} \) , 
    \end{align*} 
    which proves (\ref{eq:target-0}), and thus concludes the proof. 
    % and recalling the definition of $\cosq$ in (\ref{eq:def-inner}) finishes the proof. 
    % \end{align*}
    % --------------------------------------
    % Since $ | \y_{\oa{\grad f (\y)}} | = \| \grad f (\y) \|_\y $ and $ | \oa{\x\y} | = \| \Exp_{\y}^{-1} ( \x ) \|_\y $, it suffices to prove 
    % \begin{align*} 
    %     \cosq \( \y_{\oa{\grad f (\y)}} , \oa{\x\y} \) 
    %     \le 
    %     \cos \measuredangle \( \y ; {\x}, {\Exp_p (\grad f (\y))} \) , 
    % \end{align*} 
    % where $  \measuredangle \( \y ; {\x}, {\Exp_p (\grad f (\y))} \) $ is the angle in $T_\y \M$ whose two sides map to $ \oa{\y\x} $ and $ \oa{ \y \Exp_\y \(\grad f (\y)\) } $ via the exponential map (at $\y$). Since $ T_y\M $ is flat, the law of cosine in Euclidean manifold gives 
    % \begin{align} 
    %     \cos \measuredangle \( \y ; {\x}, {\Exp_p (\grad f (\y))} \)
    %     = 
    %     \frac{ \| \Exp_{\y}^{-1} (\x) \|_\y^2 + \| \grad f (\y) \|_\y^2 - \| \v \|_\y^2  }{ 2 \left\| \Exp_{\y}^{-1} (\x) \right\|_\y \| \grad f (\y) \|_\y } , \label{eq:pause}
    % \end{align} 
    % where $ \v = \Exp_{\y}^{-1} \(\x\) - \grad f (\y) $, and the minus operation is defined in $T_\y \M$. 
    % % is the vector in $ $
    % By the comparison theorem of Toponogov, we know 
    % \begin{align}
    %     \| \Exp_{\y}^{-1} \(\x\) - \grad f (\y) \| \le | \oa{ \x \z } |, 
    %     \label{eq:comp}
    % \end{align}
    % where $\z = \Exp_{\y} \( \grad f (\y) \)$. Plugging (\ref{eq:comp}) into (\ref{eq:pause}) and recalling the definition of $\cosq$ in (\ref{eq:def-inner}) finishes the proof. 
\end{proof}

% \section{Convexifying Effect of Quasi-linearized Inner Product} 

\subsection{Convexity over Hadamard manifold}

Now we present some basic facts about convex functions on Riemannian manifolds, which will serve as the preliminaries for this paper.
As a consequence of the celebrated results of Hopf--Rinow, the notion of geodesically convex functions can be defined over the entire Hadamard manifold; See Definition \ref{def:g-conv}. 

\begin{definition}[$g$-convexity]
    \label{def:g-conv}
    Let $ ( \M , g ) $ be a Hadamard manifold with distance metric $d$, and let $f$ be a real-valued differentiable function defined over $\M$. We say $ f $ is $g$-convex (or geodesically convex) with parameter $\mu$ if 
    \begin{align*}
        f (\x) \ge f (\y) + \< \grad f (\y) , \Exp_{\y}^{-1} ( \x ) \>_\y + \frac{\mu}{2} d^2 (\x, \y), \quad \forall \x, \y \in \M, 
    \end{align*} 
    where $ \< \cdot, \cdot\>_\y $ is the inner product in $ T_\y \M $. When $ \mu = 0$, we simply say $f$ is $g$-convex. When $ \mu > 0 $, we say $f$ is strongly $g$-convex with parameter $\mu$ (or $\mu$-strongly $g$-convex). 
    %When $ \mu < 0 $, we say $f$ is weakly $g$-convex with parameter $\mu$ (or $\mu$-weakly $g$-convex). 
    % Also, we use $ \| \cdot \|_\x $ to denote the norm in $ T_\x \M $ for any $\x \in \M$. 
    
    % for every $\x \in \M$, 
\end{definition}

In companion to the above notion of convexity, we introduce another notion of convexity, raised by the quasilinearized inner product. 

\begin{definition}[$q$-convexity]
    \label{def:q-conv}
    Let $ ( \M , g ) $ be a Hadamard manifold with distance metric $d$, and let $f$ be a real-valued differentiable function defined over $\M$. We say $ f $ is $q$-convex with parameter $\mu$ if
    \begin{align*} 
        f (\x) \ge f (\y) + \< \oa{\y \Exp_{\y} \( \grad f (\y) \) } , \oa{\y\x} \> + \frac{\mu}{2} d^2 (\x, \y) , \quad \forall \x, \y \in \M, 
    \end{align*}  
    where $ \< \cdot, \cdot\> $ is defined in (\ref{eq:def-inner}). When $ \mu = 0$, we simply say $f$ is $q$-convex. 
    %When $ \mu > 0 $, we say $f$ is strongly $q$-convex. When $ \mu < 0 $, we say $f$ is weakly $q$-convex.
    % for every $\x \in \M$, 
\end{definition}

We have the following result that relates these two notions of convexity. 

%     The above inner product defined in (\ref{eq:def-inner}) generalizes almost all calculation rules of the Euclidean inner product. 
% \end{claim} 

\begin{lemma}
    \label{lem:conv-rela-1}
    If function $f$ is $g$-convex, then $f$ is $q$-convex. 
\end{lemma}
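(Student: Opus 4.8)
The plan is to reduce $q$-convexity to $g$-convexity by comparing only the ``linear'' terms, since the function-value terms and the curvature terms $\frac{\mu}{2} d^2(\x,\y)$ are identical in Definitions \ref{def:g-conv} and \ref{def:q-conv}. Starting from the $g$-convexity inequality
\begin{align*}
    f(\x) \ge f(\y) + \< \grad f(\y), \Exp_\y^{-1}(\x) \>_\y + \frac{\mu}{2} d^2(\x,\y),
\end{align*}
it therefore suffices to establish the single pointwise comparison
\begin{align*}
    \< \oa{\y \Exp_\y(\grad f(\y))}, \oa{\y\x} \> \le \< \grad f(\y), \Exp_\y^{-1}(\x) \>_\y, \quad \forall \x, \y \in \M.
\end{align*}
Substituting this bound into the $g$-convexity inequality immediately yields the defining inequality of $q$-convexity, since replacing the larger tangent-space term by the smaller quasilinearized term only weakens the lower bound on $f(\x)$.

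To prove the comparison inequality, I would apply Lemma \ref{lem:compare} with the shared base point taken to be $\y$. In the statement $\< \oa{\x\y}, \oa{\x\z}\> \le \< \Exp_\x^{-1}(\y), \Exp_\x^{-1}(\z)\>_\x$ of that lemma, I set the base point to $\y$, let the first endpoint be $\Exp_\y(\grad f(\y))$, and let the second endpoint be $\x$. This produces
\begin{align*}
    \< \oa{\y \Exp_\y(\grad f(\y))}, \oa{\y\x}\> \le \< \Exp_\y^{-1}(\Exp_\y(\grad f(\y))), \Exp_\y^{-1}(\x)\>_\y.
\end{align*}

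The final step is to simplify the right-hand side using that on a Hadamard manifold the exponential map $\Exp_\y$ is a bijection (the Cartan--Hadamard theorem, as recorded in the preliminaries), so that $\Exp_\y^{-1}(\Exp_\y(\grad f(\y))) = \grad f(\y)$. This collapses the right-hand side to $\< \grad f(\y), \Exp_\y^{-1}(\x)\>_\y$, which is exactly the comparison inequality claimed above.

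There is essentially no analytical obstacle here: the geometric content is carried entirely by Lemma \ref{lem:compare}, which in turn rests on Toponogov's comparison theorem. The only points demanding care are the bookkeeping of the substitution into Lemma \ref{lem:compare} --- matching each endpoint to the correct role --- and the (otherwise routine) appeal to bijectivity of the exponential map needed to cancel $\Exp_\y^{-1} \circ \Exp_\y$.
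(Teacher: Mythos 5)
Your proposal is correct and follows exactly the paper's proof: reduce to the pointwise comparison $\< \oa{\y \Exp_{\y}(\grad f(\y))}, \oa{\y\x} \> \le \< \grad f(\y), \Exp_{\y}^{-1}(\x) \>_\y$ and invoke Lemma \ref{lem:compare}. The paper states this step tersely ("follows directly from Lemma \ref{lem:compare}"), whereas you spell out the substitution of endpoints and the cancellation $\Exp_\y^{-1}\circ\Exp_\y = \mathrm{id}$, which is a faithful and correct elaboration of the same argument.
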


\begin{proof} 
    It suffices to prove, for any $\x, \y \in \M$, 
    \begin{align} 
         \< \oa{\y \Exp_{\y} \( \grad f (\y) \) } , \oa{\y\x} \> 
         \le 
         \< \grad f (\y) , \Exp_{\y}^{-1} (\x ) \>_\y . \label{eq:key}
    \end{align} 
    Eq.(\ref{eq:key}) follows directly from Lemma \ref{lem:compare}.
    % Since $ | \oa{ \y \Exp_\y (\grad f (\y)) } | = \| \grad f (\y) \|_\y $ and $ | \oa{\x\y} | = \| \Exp_{\y}^{-1} ( \x ) \|_\y $, it suffices to prove 
    % \begin{align*} 
    %     \cosq \( \oa{ \y \Exp_\y (\grad f (\y)) } , \oa{\x\y} \) 
    %     \le 
    %     \cos \measuredangle \( \y ; {\x}, {\Exp_p (\grad f (\y))} \) , 
    % \end{align*} 
    % where $  \measuredangle \( \y ; {\x}, {\Exp_p (\grad f (\y))} \) $ is the angle in $T_\y \M$ whose two sides map to $ \oa{\y\x} $ and $ \oa{ \y \Exp_\y \(\grad f (\y)\) } $ via the exponential map (at $\y$). Since $ T_\y \M $ is flat, the law of cosine in Euclidean manifold gives 
    % \begin{align} 
    %     \cos \measuredangle \( \y ; {\x}, {\Exp_p (\grad f (\y))} \)
    %     = 
    %     \frac{ \| \Exp_{\y}^{-1} (\x) \|_\y^2 + \| \grad f (\y) \|_\y^2 - \| \v \|_\y^2  }{ 2 \left\| \Exp_{\y}^{-1} (\x) \right\|_\y \| \grad f (\y) \|_\y } , \label{eq:pause}
    % \end{align} 
    % where $ \v = \Exp_{\y}^{-1} \(\x\) - \grad f (\y) $, and the minus operation is defined in $T_\y \M$. 
    % % is the vector in $ $
    % By the comparison theorem of Toponogov, we know 
    % \begin{align}
    %     \| \Exp_{\y}^{-1} \(\x\) - \grad f (\y) \| \le | \oa{ \x \z } |, 
    %     \label{eq:comp}
    % \end{align}
    % where $\z = \Exp_{\y} \( \grad f (\y) \)$. Plugging Eq. (\ref{eq:comp}) into Eq. (\ref{eq:pause}) and recalling the definition of $\cosq$ in Eq. (\ref{eq:def-inner}) finishes the proof. 
\end{proof}

\section{First-order methods for convex optimization over Hadamard manifolds}

\label{section:method}

% In the next section, we briefly introduce our algorithm for geodesically convex optimization over Hadamard manifolds.
% We outline the intuition behind the algorithm and formally present our main results. A detailed exposition of the algorithmic framework and supporting proofs will be provided in subsequent sections.

% \subsection{Proximal Iteration} 
% Before introducing the algorithm, 

% The 
% We first present the iterative update rule.
% We define the proximal iteration sequence $\{ \x_t \}_t$ by
The first-order method we will investigate is given by the following Proximal Gradient iteration: 
\begin{align}
    \x_t = \Exp_{\x_{t+1}} \( \eta \grad f (\x_{t+1}) \). 
    \label{eq:proximal-iter}
\end{align} 
% This implicit proximal update inherents several important properties from its Euclidean counterparts, as formalized in Proposition \ref{??}
This implicit proximal update preserves several important properties of its Euclidean analogues, as stated in Proposition \ref{prop:proximal}.

\begin{proposition} 
\label{prop:proximal} 
    Let $(\M, g)$ be a Hadamard manifold with distance metric $d$. Let $f$ be a $g$-convex and differential function defined over $\M$. 
    % Then the following holds. 
    % \begin{itemize}
        % \item[Equivalence]
        % \item[Efficient ]
    % \end{itemize}
    Then for any $\x \in \M$ and $\eta > 0$, $\y$ solves 
    \begin{align*} 
        \x = \Exp_{\y} \( \eta \grad f (\y) \)
    \end{align*} 
    if and only if $\y$ solves 
    \begin{align*} 
        \y = \arg\min_\z \left\{ f (\z) + \frac{1}{2\eta}  d(\x,\z)^2 \right\} . 
    \end{align*}  
\end{proposition}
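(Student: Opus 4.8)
The plan is to view this proposition as the Riemannian counterpart of the classical identification of the proximal map with the resolvent of the gradient, and to reduce both implications to a single first-order optimality condition. I would introduce the auxiliary objective $h(\z) := f(\z) + \frac{1}{2\eta} d^2(\x,\z)$, whose minimizers are by definition the solutions of the variational problem on the right-hand side, and compute its Riemannian gradient at a candidate point $\y$. The computation hinges on the standard identity
\begin{equation*}
\grad_{\z}\left( \tfrac{1}{2} d^2(\x,\z) \right)\Big|_{\z=\y} = -\,\Exp_{\y}^{-1}(\x),
\end{equation*}
which holds globally on a Hadamard manifold since $\Exp_\y$ is a diffeomorphism and $d^2(\x,\cdot)$ is therefore smooth with no cut locus. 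Granting this, $\grad h(\y) = \grad f(\y) - \tfrac{1}{\eta}\Exp_\y^{-1}(\x)$, so $\grad h(\y) = 0$ is equivalent to $\Exp_\y^{-1}(\x) = \eta\,\grad f(\y)$, i.e.\ to $\x = \Exp_\y\!\left(\eta\,\grad f(\y)\right)$.

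To convert this vanishing-gradient condition into the two directions of the stated equivalence, I would invoke convexity. Because $f$ is $g$-convex and $\z \mapsto \frac{1}{2\eta} d^2(\x,\z)$ is $g$-convex on a Hadamard manifold (indeed the squared distance is $2$-strongly $g$-convex), the sum $h$ is strongly $g$-convex and admits a unique minimizer. For the implication ``$\y = \arg\min_\z h(\z)$ $\Rightarrow$ exponential equation'', I use that an interior minimizer of a differentiable function on a boundaryless manifold is a critical point, so $\grad h(\y)=0$ and the computation above gives $\x = \Exp_\y(\eta\,\grad f(\y))$. For the converse, if $\x = \Exp_\y(\eta\,\grad f(\y))$ then $\grad h(\y)=0$; and since $h$ is $g$-convex, Definition~\ref{def:g-conv} with $\mu = 0$ yields $h(\z) \ge h(\y) + \langle \grad h(\y), \Exp_\y^{-1}(\z)\rangle_\y = h(\y)$ for all $\z$, so $\y$ is the global minimizer.

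The step I expect to be the main obstacle is establishing the gradient identity for the squared distance rigorously and, crucially, globally. In Euclidean space this is trivial, but on a manifold it rests on the first variation formula together with the Gauss lemma, and one must verify it at every pair of points rather than only within an injectivity-radius ball. Here the Cartan--Hadamard structure is exactly what removes the obstruction: uniqueness of minimizing geodesics and bijectivity of the exponential map make $d^2(\x,\cdot)$ smooth everywhere, so the identity holds with no curvature-lower-bound or bounded-domain hypothesis. An alternative route, closer to the spirit of this paper, is to avoid the smooth first-variation computation altogether and extract the directional behaviour of $\frac{1}{2}d^2(\x,\z)$ along geodesics issuing from $\y$ through the quasilinearized inner product of Definition~\ref{def:quail-inner-prod} and the comparison in Lemma~\ref{lem:compare}; either route reduces the proposition to the single criticality computation above.
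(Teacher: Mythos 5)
Your proposal is correct and takes essentially the same route as the paper's proof: both reduce the equivalence to the gradient identity $\grad_{\z}\bigl(\tfrac{1}{2}d^2(\x,\z)\bigr) = -\Exp_{\z}^{-1}(\x)$ (valid globally on a Hadamard manifold) combined with first-order optimality for geodesically convex functions. The only differences are of detail: the paper derives that identity from scratch via geodesic polar coordinates and Gauss's lemma where you cite it as standard, and you are somewhat more explicit than the paper in observing that the stationarity-implies-global-minimizer direction needs convexity of the full objective $h$ (via convexity of the squared distance on a Hadamard manifold), not merely of $f$.
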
 

\begin{proof}
% \begin{proposition}
% \label{gradsqd}
    % Let $(\M, d)$ be a Hadamard manifold. 
    For fixed $\x \in \M$, let $d_\x^2 (\z) := (d (\x, \z))^2$ be the squared distance, seen as a function of $\z \in \M$. We will first prove the following equality:
    % then its gradient is:
    \begin{align}
      \grad  d_\x^2 (\z) = - 2 \Exp_\z^{- 1} (\x). \label{eq:sq-dist}
    \end{align}
% \end{proposition}
    By the celebrated Gauss's Lemma, we can adopt a geodesic polar coordinate system near $\x$:
    \begin{equation} 
    d s^2 = d r^2 + \sum_{i, j} g_{i j} (r, \theta) d \theta^i d \theta^j ,
    \label{polar}
    \end{equation}
    where $r (\z) = d (\x, \z)$ and $\theta = (\theta^1, \ldots, \theta^{n - 1})$
    is a coordinate on the geodesic hypersphere $\mathbb{S}_\x (r) = \{ \Exp_\x (\v) | \v \in T_\x \M \|, \| \v \| = r \}$. Moreover, by Eq. (\ref{polar}), 
    \begin{align*}
      \left\langle \frac{\partial}{\partial r}, \frac{\partial}{\partial r}
     \right\rangle_{\x} = 1, 
     \quad \left\langle \frac{\partial}{\partial r},
     \frac{\partial}{\partial \theta^i} \right\rangle_{\x} = 0.
    \end{align*}
    Thus $\left. \frac{\partial}{\partial r} \right|_z \in T_z M$ is the normal
    vector in the radial direction. More accurately, consider the geodesic
    $\gamma : [0, 1] \rightarrow M$, $\gamma (0) = x, \gamma (1) = z$. Gauss's
    Lemma tells us $\left\langle \gamma' (t), \frac{\partial}{\partial \theta^i}
    \right\rangle = 0$, thus $\frac{\gamma' (t)}{\| \gamma' (t) \|} = \left.
    \frac{\partial}{\partial r} \right|_{\gamma (t)}$. The time-reversed
    $\overline{\gamma} (t) = \gamma (1 - t)$ is the geodesic from
    $\overline{\gamma} (0) = \z$ to $\overline{\gamma} (1) = \x$, so by the
    definition of exponential map, $\Exp_\z^{- 1} (\x) = \overline{\gamma}'
    (0) = - \gamma' (1)$. Thus we get
    \begin{align*}
    \left. \frac{\partial}{\partial r} \right|_\z = - \frac{\Exp_\z^{- 1}
     (\x)}{\| \Exp_\z^{- 1} (\x) \|} . 
    \end{align*}
    Let us compute $\text{grad} r$. By definition, $\text{grad} r \in
    \mathfrak{X} (M)$ is a smooth vector field such that
    \begin{align*}
    \forall X \in \mathfrak{X} (M), \quad \langle \text{grad} r, X \rangle = d r
     (X) = X r.
    \end{align*}
    In the geodesic polar coordinate system, $X = \eta \frac{\partial}{\partial
    r} + \sum_i \xi^i \frac{\partial}{\partial \theta^i}$. Then $X r = \eta =
    \left\langle \frac{\partial}{\partial r}, X \right\rangle$. Thus
    \begin{align*}
      \text{grad} r = \frac{\partial}{\partial r} .
    \end{align*}
    Then
    \begin{align*}
    \text{grad} r^2 = 2 r \text{grad} r = 2 r \frac{\partial}{\partial r} .
    \end{align*}
    Now $d_\x^2 (\z) = (d (\x, \z))^2 = r^2 (\z)$ and $\left.
    \frac{\partial}{\partial r} \right|_\z = - \frac{\Exp_\z^{- 1} (\x)}{\|
    \Exp_\z^{- 1} (\x) \|}$ gives
    \begin{align*}
    \text{grad} d_\x^2 (\z) = (\text{grad} r^2) |_\z  = 2 r (\z) 
     \left. \frac{\partial}{\partial r} \right|_\z = - 2 d (\x, \z)
     \frac{\Exp_\z^{- 1} (\x)}{\| \Exp_\z^{- 1} (\x) \|} = - 2
     \Exp_\z^{- 1} (\x) .
    \end{align*}

    \textbf{The \textit{if} direction.} If $\y$ solves $\y = \arg\min_\z \left\{ f (\z) + \frac{1}{2\eta}  d(\x,\z)^2 \right\}$, then $ \grad f (\y) + \frac{1}{2\eta} \grad \; d_{\x}^2 (\y) = 0 $ since $f$ is convex differentiable. By \eqref{eq:sq-dist} we have $ \x = \Exp_{\y} \( \eta \grad f (\y) \) $. 

    \textbf{The \textit{only if} direction.} If $\y$ solves $ \x = \Exp_{\y} \( \eta \grad f (\y) \) $, then by \eqref{eq:sq-dist}, $ \grad f (\y) + \frac{1}{2\eta} \grad \; d_{\x}^2 (\y) = 0 $. Since $f$ is convex differentiable, first-order optimality condition implies $\y$ solves 
    \begin{align*}
        \y = \arg\min_\z \left\{ f (\z) + \frac{1}{2\eta}  d(\x,\z)^2 \right\} . 
    \end{align*}
    % $ \y = \arg\min_\z \left\{ f (\z) + \frac{1}{2\eta}  d(\x,\z)^2 \right\} $. 
    % By \eqref{eq:sq-dist} we have $ \x = \Exp_{\y} \( \eta \grad f (\y) \) $. 
    
    % The proximal update rule gives 
    % \begin{align*} 
    %      \x = \Exp_{\y} \( \eta \grad f (\y) \) , \quad \text{ and thus } \quad  \eta \grad f (\y) = \Exp_{\y}^{-1} \(\x\).  
    % \end{align*} 

    % By Proposition \ref{gradsqd}, this equals to  
    % \begin{align*} 
    %     \grad f (\y) + \frac{1}{2\eta} \grad \; d_{\x}^2 (\y) = 0 . 
    % \end{align*} 
    % For $g$-convex function $f$, this means
    % \begin{align*}
    %     \y = \arg\min_\z \left\{ f (\z) + \frac{1}{2\eta}  d(\x,\z)^2 \right\}. 
    % \end{align*}
\end{proof}

This formulation of the proximal operator facilitates an efficient implementation, detailed in Section \ref{section:proximal}. 
%The proximal algorithm is summarized in Algorithm \ref{alg:prox}.
We present our main convergence rate in Theorem \ref{thm:main}, which admits a concise proof via quasilinearization.

% \begin{algorithm}[htb] 
%     \caption{Proximal Gradient Algorithm} 
%     \label{alg:prox} 
%     \begin{algorithmic}[1]  
%         \STATE \textbf{Input:} step size $\eta$; starting point $\x_0$.
%         \FOR{$t=0,1,\cdots$}
%             \STATE solve $\x_{t+1}$ by $ \x_t = \Exp_{\x_{t+1}} \( \eta \grad f (\x_{t+1}) \) $. 
%         \ENDFOR 
%     \end{algorithmic} 
% \end{algorithm} 

% This characterization of the proximal operation allows us to implement it efficiently. Next, we state the convergence result, whose proof is immediate with the notion of quasilinearization. 

\begin{theorem} 
    \label{thm:main}
    Let $ ( \M , g ) $ be a Hadamard manifold with distance metric $d$, and let $f$ be a real-valued differentiable function defined over $\M$.
    Let the objective $f$ be $g$-convex on Hadamard manifold. 
    The sequence $\{\x_t \}_t$ governed by Proximal Gradient Algorithm ( Eq.(\ref{eq:proximal-iter})) satisfies 
    \begin{align*} 
        f (\x_t) - f (\x^*) 
        \le 
        \frac{|\oa{\x_0\x^*}|^2 }{\eta t} , \quad \forall t . 
    \end{align*} 
    \label{thm:prox}
\end{theorem}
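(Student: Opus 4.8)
The plan is to emulate the classical convergence analysis of the Euclidean proximal point method, but to replace the Euclidean three-point (cosine) identity --- which rests on the parallelogram law and therefore fails on a curved space --- with the \emph{exact} three-point identity furnished by the quasilinearized inner product. The only place where geometry will enter is Lemma \ref{lem:compare}, and I would invoke it in the curvature-favorable direction so that non-positivity of the curvature (the Hadamard hypothesis) alone suffices and no curvature lower bound is ever needed.

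First I would rewrite the update. Reading the iteration $x_t = \Exp_{x_{t+1}}(\eta \grad f(x_{t+1}))$ backward, Proposition \ref{prop:proximal} identifies $x_{t+1}$ as the proximal point $\arg\min_z\{f(z) + \frac{1}{2\eta}d(x_t,z)^2\}$, and in particular yields $\Exp_{x_{t+1}}^{-1}(x_t) = \eta\,\grad f(x_{t+1})$. The heart of the argument is then a single per-step inequality. Applying $g$-convexity (Definition \ref{def:g-conv}) at the base point $x_{t+1}$ with comparison point $x^*$ gives $\langle \grad f(x_{t+1}), \Exp_{x_{t+1}}^{-1}(x^*)\rangle_{x_{t+1}} \le f(x^*) - f(x_{t+1})$. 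Multiplying by $\eta$, using linearity in the first argument to substitute $\eta\,\grad f(x_{t+1}) = \Exp_{x_{t+1}}^{-1}(x_t)$, and then bridging to the quasilinearized inner product through Lemma \ref{lem:compare} (base point $x_{t+1}$, endpoints $x_t$ and $x^*$), I obtain $\langle \oa{x_{t+1}x_t}, \oa{x_{t+1}x^*}\rangle \le \eta\,(f(x^*) - f(x_{t+1}))$. Expanding the left-hand side by the overlap formula of Definition \ref{def:quail-inner-prod} gives the exact identity $\langle \oa{x_{t+1}x_t}, \oa{x_{t+1}x^*}\rangle = \frac{1}{2}\big(d^2(x_{t+1},x_t) + d^2(x_{t+1},x^*) - d^2(x_t,x^*)\big)$, so after discarding the nonnegative term $d^2(x_{t+1},x_t)$ I reach the telescoping bound $f(x_{t+1}) - f(x^*) \le \frac{1}{2\eta}\big(d^2(x_t,x^*) - d^2(x_{t+1},x^*)\big)$.

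It then remains to telescope and to invoke monotonicity. Summing the per-step bound over $s = 0, \ldots, t-1$ collapses the right-hand side to $\frac{1}{2\eta}\big(d^2(x_0,x^*) - d^2(x_t,x^*)\big) \le \frac{1}{2\eta}d^2(x_0,x^*)$. The proximal characterization from Proposition \ref{prop:proximal} also yields $f(x_{s+1}) + \frac{1}{2\eta}d(x_s,x_{s+1})^2 \le f(x_s)$, so $f$ is non-increasing along the trajectory; hence $t\,(f(x_t) - f(x^*)) \le \sum_{s=0}^{t-1}(f(x_{s+1}) - f(x^*))$, and dividing by $t$ produces $f(x_t) - f(x^*) \le \frac{d^2(x_0,x^*)}{2\eta t}$, which is in fact slightly stronger than the claimed bound $\frac{|\oa{x_0 x^*}|^2}{\eta t}$.

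The main obstacle --- and the conceptual payoff --- is the middle step: on a general Hadamard manifold there is no parallelogram law, so the Euclidean three-point identity that normally drives the proximal analysis is simply unavailable in the tangent space. Quasilinearization circumvents this by supplying an exact three-point identity expressed directly in pairwise distances, while Lemma \ref{lem:compare} provides the one-sided comparison needed to transfer the $g$-convexity inequality (stated via the genuine tangent-space inner product) into this quasilinear language. I would take care that this comparison is applied so that its curvature-induced slack is discarded favorably --- which it is, since the slack only strengthens the inequality once the nonnegative $d^2(x_{t+1},x_t)$ term is dropped --- thereby avoiding any appeal to a curvature lower bound and hence to Assumptions \textbf{(A1)} and \textbf{(A2)}.
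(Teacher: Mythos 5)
Your proposal is correct and follows essentially the same route as the paper's own proof: $g$-convexity at $\x_{t+1}$, the bridge to the quasilinearized inner product via Lemma \ref{lem:compare}, the exact three-point expansion $\< \oa{\x_{t+1}\x_t}, \oa{\x_{t+1}\x^*} \> = \frac{1}{2}\( d^2(\x_{t+1},\x_t) + d^2(\x_{t+1},\x^*) - d^2(\x_t,\x^*) \)$, monotonicity of $f(\x_t)$ from the proximal characterization in Proposition \ref{prop:proximal}, and telescoping. If anything, your instantiation of Lemma \ref{lem:compare} directly at the endpoint $\x_t = \Exp_{\x_{t+1}}\( \eta \grad f(\x_{t+1}) \)$ is slightly cleaner than the paper's (which invokes the lemma at the unit-scaled gradient endpoint and then rescales by $\eta$), and like the paper's argument it actually yields the sharper constant $\frac{|\oa{\x_0\x^*}|^2}{2\eta t}$.
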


% Our method achieves a sublinear convergence rate, consistent with gradient descent and other first-order convex optimization methods. However, our results apply to arbitrary Hadamard manifolds, without requiring bounded curvature or a bounded domain. 
% To prove Theorem \ref{thm:prox}, we employ a key technique called the quasilinearization. A detailed proof will be provided in Section \ref{sec:proof}.

% \subsection{Proof of Theorem \ref{thm:prox}}

% \label{sec:proof}

% Lemma \ref{lem:compare} compares two types of inner products and plays a crucial role in the proof of Theorem \ref{thm:prox}. The proof is presented as follows.

\begin{proof}[Proof of Theorem \ref{thm:main}]
    The proximal update rule in Eq. (\ref{eq:proximal-iter}) gives 
    \begin{align*} 
        \eta \grad f (\x_{t+1}) = \Exp_{\x_{t+1}}^{-1} \(\x_{t}\) . %\quad \text{ and } \quad \x_t = \Exp_{\x_{t+1}} \( \eta \grad f (\x_{t+1}) \) .  
    \end{align*} 
    
    By $g$-convexity, we know 
    \begin{align*} 
        \; f (\x_{t+1}) - f (\x^*) 
        \le &
        \< - \grad f (\x_{t+1}) , \Exp_{\x_{t+1}}^{-1} \(\x^*\) \>_{\x_{t+1}} \\ 
        \overset{(i)}{\le}& \; 
        - \< \oa{ \x_{t+1} \Exp_{\x_{t+1}} \( \grad f (\x_{t+1}) \) } , \oa{ \x_{t+1} \x^* } \> 
        =
        \< \oa{ \Exp_{\x_{t+1}} \( \grad f (\x_{t+1}) \) \x_{t+1} } , \oa{ \x_{t+1} \x^* } \> , 
        % = 
        % \label{eq:convex} 
    \end{align*} 
    where $(i)$ follows from Lemma \ref{lem:compare}. 
    
    By the update rule, we have $\x_{t} = \Exp_{\x_{t+1}} \( \eta \grad f (\x_{t+1})\) $. 
    Therefore, 
    \begin{align} 
        \eta \( f (\x_{t+1}) - f (\x^*) \)
        \le 
        \< \oa{ \x_{t} \x_{t+1}  } , \oa{ \x_{t+1} \x^* } \> 
        = 
        - \frac{\eta^2}{2}  \| \grad f (\x_{t+1}) \|_{\x_{t+1}}^2 - \frac{1}{2} | \oa{\x_{t+1} \x^*} |^2 + \frac{1}{2} | \oa{\x_{t} \x^*} |^2 .  \label{eq:for-tele}
    \end{align} 

    By the equivalent characterization of the proximal operator in Proposition \ref{prop:proximal}, we obtain 
    \begin{align*}
        f (\x_{t+1}) = \min_{\y} \left\{ f(\y) + \frac{1}{2 \eta} d(\x_t, \y)^2 \right\}, 
    \end{align*}
    and thus
    \begin{align*} 
        f (\x_{t+1}) \le f (\x_t). 
    \end{align*} 

    Then telescoping Eq. (\ref{eq:for-tele}) and summing over $t$ give 
    \begin{align*} 
        f (\x_t) - f (\x^*) 
        \le
        \frac{1}{t} \sum_{i=1}^t f(\x_i) - f(\x^*)
        \le
        \frac{1}{t} \sum_{i=1}^t \left( - \frac{1}{2\eta} | \oa{\x_{i} \x^*} |^2 + \frac{1}{2\eta} | \oa{\x_{i-1} \x^*} |^2 \right)
        \le
        \frac{|\oa{\x_0\x^*}|^2 }{\eta t} , \quad \forall t . 
    \end{align*} 
\end{proof}

\subsection{Stochastic First-order Method for $g$-Convex Objectives} 
\label{sec:stoc-convex}

In this section, we extend the proximal iteration to the stochastic optimization setting, and present the corresponding algorithm along with its theoretical guarantees.

In stochastic optimization, we aim to analyze 
$$F(\x) = \E [ f(\x; \xi) ] = \int_{\xi \in \Theta } f (\x; \xi) \; \lambda (d\xi), $$ 
where $\lambda$ is a measure over $\Theta$. A concrete example of such objectives is the finite-sum objectives that are prevalent in machine learning. 
In such cases, we cannot directly observe information about $F$. Instead, we only have access to stochastic samples $f(\x; \xi)$ and its gradient $\grad f(\x; \xi)$, where $\xi$ is sampled from the law of $\lambda$.

The next assumption limits the variance of the random variables $\xi$. Constraints like this are commonly adopted in the literature, and necessary to keep the variance under control.

% Consider the following (finite-sum) optimization problem: 
% \begin{align*} 
%     \min_{\x \in \M} F (\x),  
% \end{align*} 
% where $ F (\x) = \int_{\xi} f (\x; \xi) \mu (d\xi) .  $

\begin{assumption}
    Suppose that $ f (\cdot; \xi) $ is $g$-convex for any $\xi$. Also, instate a bounded variance assumption: 
    \begin{align*}
        \E \| \grad  f (\x; \xi) \|_\x^2 \le \| \grad F (\x) \|_{\x}^2 + \sigma^2 , \quad \forall \x . 
    \end{align*}
    \label{assump:finite-var}
\end{assumption}

% \textbf{Algorithm procedure:} 
% In each step $ t $, we observe a realization of the random variable $\xi$, denoted $\xi_t$, and perform a proximal gradient step on the function $ f (\x; \xi_t) $. This is an abstraction of batch minimization for finite-sum problems, where the objective is $ F (\x) := \sum_{i=1}^n f_i (\x) $, and each time we randomly sample a subset of $\{ f_i \}_{i=1}^n$ to perform a first-order step. 

Proximal operator can be used in stochastic setting. The algorithm is outlined below, with convergence rate provided in Theorem \ref{thm:stoc-pro}.

\begin{algorithm}[htb] 
    \caption{Stochastic Proximal Gradient Method} 
    \label{alg:prox-stoc} 
    \begin{algorithmic}[1]  
        \STATE \textbf{Input:} step sizes $\{\eta_t\}_t$; starting point $\x_0$.
        \FOR{$t=1,2,\cdots$} 
            \STATE Pick $ \xi_t $ governed by the law $\mu$. 
            \STATE \text{ solve } $\x_{t+1}$ \text{by} $ \x_t = \Exp_{\x_{t+1}} \( \eta_t \grad f(\x_{t+1}; \xi_t) \) $
            %$ \x_{t+1} = \mathrm{Prox}_{\eta_t,\x_t} f(\cdot; \xi_t) $. 
        \ENDFOR 
    \end{algorithmic} 
\end{algorithm} 

\begin{theorem} 
    \label{thm:stoc-pro}
    Let $(\M,g)$ be a Hadamard manifold with distance metric $d$. Let Assumption \ref{assump:finite-var} be true. Function $F$ is strongly $g$-convex with parameter $\mu$ and $L$-smooth. For Stochastic Proximal Gradient Algorithm, if step size $\eta_t = \frac{1}{2L \sqrt{t}}$, it satisfies 
    \begin{align}
        \frac{1}{\sum_{t=1}^T \alpha_t} \sum_{t=1}^T \alpha_t \( \E  F (\x_t) - F (\x^* ) \) 
        \le 
        \frac{| \oa{\x_{0} \x^*} |^2}{2 \sum_{t=1}^T \alpha_t }  +  \frac{  \sigma^2 \log (T+1) }{ \sum_{t=1}^T \alpha_t } , \label{eq:rate-1}
    \end{align}
    where $\alpha_t = \frac{1}{4 L \sqrt{t}}$ for all $t$. 
    % If $ \eta_t \approx \frac{1}{\sqrt{t}} $, then the algorithm (a weighted average of the trajectory) converges at rate $ O \( \frac{\log T}{ \sqrt{T} } \) $ in expectation. 

    If $ \eta_t = \frac{1}{\sqrt{t} \log t} $, then 
    \begin{align}
        \sum_{s=1}^\infty \( \eta_s - L \eta_s^2\) \( F (\x_s) - F (\x^*) \) < \infty, \quad a.s. . \label{eq:rate-2} 
    \end{align} 
    % where $\beta_t := $
\end{theorem}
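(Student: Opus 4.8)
The plan is to rerun the quasilinearization argument from the proof of Theorem~\ref{thm:main} on the \emph{realized} function $f(\cdot;\xi_t)$ and then control the stochastic error by conditioning on the natural filtration. First I would record the one-step inequality. Applying $g$-convexity of $f(\cdot;\xi_t)$ together with Lemma~\ref{lem:compare} exactly as in the deterministic case, multiplying by $\eta_t$, and using the implicit update $\Exp_{\x_{t+1}}^{-1}(\x_t)=\eta_t\grad f(\x_{t+1};\xi_t)$ (so that $d(\x_t,\x_{t+1})=\eta_t\|\grad f(\x_{t+1};\xi_t)\|_{\x_{t+1}}$) yields, via the four-point identity of Proposition~\ref{prop:qip},
\begin{align*}
\eta_t\big(f(\x_{t+1};\xi_t)-f(\x^*;\xi_t)\big)
&\le \< \oa{\x_t\x_{t+1}}, \oa{\x_{t+1}\x^*}\> \\
&= \tfrac12 d^2(\x_t,\x^*) - \tfrac12 d^2(\x_{t+1},\x^*) - \tfrac{\eta_t^2}{2}\|\grad f(\x_{t+1};\xi_t)\|_{\x_{t+1}}^2 .
\end{align*}

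The hard part is that the stochastic gradient is evaluated at $\x_{t+1}$, which — unlike in explicit SGD — depends on $\xi_t$ through the implicit solve; hence the gradient is biased and expectations cannot be taken directly. To circumvent this I would transfer the evaluation point back to $\x_t$, which is measurable with respect to $\mathcal{F}_{t-1}:=\sigma(\xi_0,\dots,\xi_{t-1})$ and therefore independent of $\xi_t$. Writing $f(\x_{t+1};\xi_t)=f(\x_t;\xi_t)-\big(f(\x_t;\xi_t)-f(\x_{t+1};\xi_t)\big)$ and bounding the gap by $g$-convexity and Cauchy--Schwarz, $f(\x_t;\xi_t)-f(\x_{t+1};\xi_t)\le \|\grad f(\x_t;\xi_t)\|_{\x_t}\,d(\x_t,\x_{t+1})\le \tfrac{\eta_t}{2}\big(\|\grad f(\x_t;\xi_t)\|^2+\|\grad f(\x_{t+1};\xi_t)\|^2\big)$, and substituting, the $\|\grad f(\x_{t+1};\xi_t)\|^2$ contributions cancel exactly against the term produced by the quasilinearization identity — this cancellation is the crux — leaving
\begin{align*}
\eta_t\big(f(\x_t;\xi_t)-f(\x^*;\xi_t)\big)
\le \tfrac12 d^2(\x_t,\x^*) - \tfrac12 d^2(\x_{t+1},\x^*) + \tfrac{\eta_t^2}{2}\|\grad f(\x_t;\xi_t)\|_{\x_t}^2 .
\end{align*}

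Now the left side and the error term are evaluated at $\x_t$, so I would take $\E[\,\cdot\mid\mathcal{F}_{t-1}]$: the identities $\E[f(\x_t;\xi_t)\mid\mathcal{F}_{t-1}]=F(\x_t)$ and $\E[f(\x^*;\xi_t)]=F(\x^*)$, together with Assumption~\ref{assump:finite-var}, give $\E[\|\grad f(\x_t;\xi_t)\|^2\mid\mathcal{F}_{t-1}]\le \|\grad F(\x_t)\|_{\x_t}^2+\sigma^2$. Converting the gradient norm through the $L$-smoothness inequality $\|\grad F(\x_t)\|^2\le 2L\big(F(\x_t)-F(\x^*)\big)$ (valid on a Hadamard manifold since $\Exp$ is globally onto) and rearranging produces the master recursion
\begin{align*}
(\eta_t-L\eta_t^2)\,\E\big[F(\x_t)-F(\x^*)\big]
\le \tfrac12\,\E\, d^2(\x_t,\x^*) - \tfrac12\,\E\, d^2(\x_{t+1},\x^*) + \tfrac{\eta_t^2\sigma^2}{2}.
\end{align*}
Here the strong-convexity parameter $\mu$ is needed only to guarantee that the minimizer $\x^*$ exists; the rates themselves rely only on convexity, smoothness, and bounded variance.

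From this single recursion both claims follow. For \eqref{eq:rate-1}, with $\eta_t=\tfrac{1}{2L\sqrt t}$ one checks $\eta_t-L\eta_t^2\ge \alpha_t=\tfrac{1}{4L\sqrt t}$ for all $t\ge1$; telescoping the distance terms, discarding $-\tfrac12\E\,d^2(\x_{T+1},\x^*)\le0$, using $\E\,d^2(\x_0,\x^*)=|\oa{\x_0\x^*}|^2$, and bounding the harmonic sum $\sum_{t=1}^T\eta_t^2=\tfrac{1}{4L^2}\sum_{t=1}^T\tfrac1t\lesssim \log(T+1)$ yields the stated averaged bound after dividing by $\sum_t\alpha_t$. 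For the almost-sure statement \eqref{eq:rate-2}, I would instead keep the recursion in conditional form with $V_t:=\tfrac12 d^2(\x_t,\x^*)\ge0$, reading it as $\E[V_{t+1}\mid\mathcal{F}_{t-1}]\le V_t-(\eta_t-L\eta_t^2)(F(\x_t)-F(\x^*))+\tfrac{\eta_t^2\sigma^2}{2}$, and invoke the Robbins--Siegmund almost-supermartingale theorem: since $\eta_t=\tfrac{1}{\sqrt t\log t}$ gives $\sum_t\eta_t^2=\sum_t\tfrac{1}{t(\log t)^2}<\infty$ and $\eta_t-L\eta_t^2\ge0$ eventually, the nonnegative increment term is almost surely summable, which is exactly \eqref{eq:rate-2}. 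The one genuine obstacle throughout is the implicit bias described above; once the evaluation point is moved to $\x_t$ and the $\x_{t+1}$-gradient terms cancel, the remainder is standard telescoping and Robbins--Siegmund machinery.
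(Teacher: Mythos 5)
Your proposal is correct and matches the paper's proof essentially step for step: the same quasilinearized one-step bound, the same transfer of the evaluation point from $\x_{t+1}$ back to $\x_t$ via convexity plus Cauchy--Schwarz/Young with the same exact cancellation of the $\|\grad f(\x_{t+1};\xi_t)\|^2$ terms (the paper's intermediate inequality \eqref{eq:intermed}), the same use of Assumption \ref{assump:finite-var} and $L$-smoothness to reach the master recursion, and the same telescoping and step-size bookkeeping for \eqref{eq:rate-1}. The only deviation is in the almost-sure part, where you apply Robbins--Siegmund in its standard form with $V_t=\tfrac12 d^2(\x_t,\x^*)$ as the nonnegative sequence and read \eqref{eq:rate-2} directly off the theorem's summability conclusion, while the paper instead treats the partial sums $\sum_{s\le t}(\eta_s-L\eta_s^2)(F(\x_s)-F(\x^*))$ as the almost supermartingale and concludes from their a.s. convergence; your variant rests on the identical recursion and is, if anything, the cleaner instantiation.
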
 

\begin{proof} 
    Consider the stochastic proximal gradient algorithm: 
    \begin{align*}
        \x_t = \Exp_{\x_{t+1}} \( \eta_t \grad f(\x_{t+1}; \xi_t) \).
    \end{align*}
    % \begin{align*} 
    %     \x_{t+1} 
    %     = 
    %     \mathrm{Prox}_{\eta_t,\x_t} f (\cdot; \xi_t) . 
    %     % := \arg\min \left\{ f (\y) + \frac{1}{2\eta} | \oa{\x\y} |^2 \right\} 
    %     % = \x_t - \eta_t \nabla f ( \x_{t+1} ; \xi_t ) . 
    % \end{align*} 
    
    Then combined with Lemma \ref{lem:compare}, we have 
    \begin{align*} 
        f ( \x_{t+1} ; \xi_t ) - f ( \x^* ; \xi_t ) 
        \le 
        \< \oa{ \Exp_{\x_{t+1}} \( \grad f (\x_{t+1}; \xi_t ) \) \x_{t+1} } , \oa{ \x_{t+1} \x^* } \> , 
        % = 
        % \frac{1}{2} \( \| \oa{\x_t} \| \)
    \end{align*}
    which implies 
    \begin{align} 
        \eta_t \( f ( \x_{t+1} ; \xi_t ) - f ( \x^* ; \xi_t ) \) 
        \le 
        - \frac{1}{2} \eta_t^2  \| \grad f (\x_{t+1}; \xi_t) \|_{\x_{t+1}}^2 - \frac{1}{2} | \oa{\x_{t+1} \x^*} |^2 + \frac{1}{2} | \oa{\x_{t} \x^*} |^2. \label{eq:descent-tri}
        % \le 
        % - | \oa{\x_{t+1} \x^*} |^2 + | \oa{\x_{t} \x^*} |^2 . 
    \end{align} 
    
    % Therefore, we have 
    % \begin{align*} 
    %     \eta_t \E_{\xi_t^k} \[ f ( \x_{t+1} ; \xi_t ) - f ( \x^* ; \xi_t ) \] 
    %     \le 
    %     | \oa{\x_{t} \x^*} |^2 - \E_{\xi_t^k} \[ | \oa{\x_{t+1} \x^*} |^2 \] . 
    % \end{align*} 
    
    % We can now again telescope. 
    
    % -----------------------------------------------------
    
    Since $ f (\cdot; \xi) $ is convex for any $\xi$, we have 
    \begin{align*} 
        \eta_t f (\x_{t+1}; \xi_t ) 
        \ge& \;  
        \eta_t f (\x_t ;\xi_t ) + \< \eta_t \grad f (\x_t; \xi_t ), \Exp_{\x_t}^{-1} \(\x_{t+1}\) \>_{\x_t} \\ 
        \ge& \;  
        \eta_t f (\x_t ;\xi_t ) - \frac{\eta_t^2}{2} \| \grad f (\x_t; \xi_t ) \|_{\x_t}^2 - \frac{1}{2} | \oa{\x_t \x_{t+1}} |^2 , 
    \end{align*} 
    where the last line uses Young's inequality. 
    
    Combining the above inequality with (\ref{eq:descent-tri}) gives 
    \begin{align} 
        \eta_t \( f (\x_t; \xi_t) - f (\x^* ; \xi_t ) \) 
        \le 
        - \frac{1}{2} | \oa{\x_{t+1} \x^*} |^2 + \frac{1}{2} | \oa{\x_{t} \x^*} |^2 + \frac{\eta_t^2}{2} \| \grad f (\x_t; \xi_t) \|_{\x_t}^2 . 
        \label{eq:intermed}
    \end{align}
    We can telescope the above inequality and obtain 
    \begin{align*}
        \sum_t \eta_t \( f (\x_t; \xi_t) - f (\x^* ; \xi_t ) \) 
        \le 
        \frac{1}{2} | \oa{\x_{0} \x^*} |^2 + \sum_t \frac{\eta_t^2}{2} \| \grad f (\x_t; \xi_t) \|_{\x_t}^2. 
    \end{align*}
    
    Taking expectation on both sides of the above inequality yields 
    \begin{align*}
        \sum_t \eta_t \( \E  F (\x_t) - F (\x^* ) \)
        \le 
        \frac{1}{2} | \oa{\x_{0} \x^*} |^2 + \sum_t \frac{\eta_t^2}{2} \E \| \grad F (\x_t) \|_{\x_t}^2 + \sum_t \frac{\eta_t^2 \sigma^2 }{2} , 
    \end{align*}
    Here we apply property of the $L$-smoothness to the above inequality, which shows 
    \begin{align*}  
        \sum_t \( \eta_t - L \eta_t^2 \) \( \E  F (\x_t) - F (\x^* ) \) 
        \le 
        \frac{1}{2} | \oa{\x_{0} \x^*} |^2 + \sum_t \frac{\eta_t^2 \sigma^2 }{2} . 
    \end{align*} 
    
    Since $ \eta_t - L \eta_t^2 \ge \frac{1}{4L \sqrt{t}} $ when $ \eta_t = \frac{1}{2L \sqrt{t}} $, the above inequality with this choice of $\eta_t$ gives 
    \begin{align*} 
        \sum_{t=1}^T \frac{1}{4L \sqrt{t}} \( \E  F (\x_t) - F (\x^* ) \) 
        \le 
        \frac{1}{2} | \oa{\x_{0} \x^*} |^2 + \frac{\sigma^2 \log (T+1)}{32L^2}, \quad \forall T \ge 2 , 
        % \sum_t \frac{\eta_t^2 \sigma^2 }{2} . 
    \end{align*} 
    which proves (\ref{eq:rate-1}) after rearranging terms. 
    % rearranging terms finishes the proof of the first inequality. 

    Next we turn to the proof of (\ref{eq:rate-2}). For this, we consider the following random variables: 
    \begin{align*} 
        V_t := \sum_{s=1}^t \( \eta_s - L \eta_s^2 \) \( F (\x_s) - F (\x^*) \) , 
    \end{align*}

    With this definition of $V_t$, we have 
    \begin{align*} 
        \( \eta_{t} - L \eta_t^2 \) \( F ( \x_t ) - F (\x^*) \) 
        = 
        V_t - V_{t-1} . 
        % \E \[ V_{t} | \x_t \] - V_{t-1} 
        % \le 
        % \frac{\eta_t^2 \sigma^2 }{2} . 
    \end{align*} 

    Now we take (conditional) expectation on (\ref{eq:intermed}), and apply the $L$-smoothness property to obtain 
    \begin{align*} 
        \E \[ V_t | \xi_1, \cdots, \xi_{t-1} \] - V_{t-1}
        =& \;  
        \( \eta_t - L \eta_t^2 \) \( \E \[ F (\x_t) | \xi_1, \cdots, \xi_{t-1} \] - F (\x^*) \) \\ 
        \le& \;  
        \frac{\sigma^2 \eta_t^2 }{2} - \frac{1}{2} \E \[ | \oa{\x_{t+1} \x^*} | \big| \xi_1, \cdots, \xi_{t-1} \] + | \oa{\x_t \x^*} | . 
    \end{align*} 

    Let $ \beta_t := \frac{\sigma^2 \eta_t^2 }{2} - \frac{1}{2} \E \[ | \oa{\x_{t+1} \x^*} | \big| \xi_1, \cdots, \xi_{t-1} \] + | \oa{\x_t \x^*} | $, which is adapted to the random sequence $ \{ \xi_t \}_t $. Now we pick $\eta_t = \frac{1}{\sqrt{t} \log t} $, and thus $ \sum_{t=1}^\infty \beta_t < \infty $. This shows that $ \{ V_t \} $ is a almost super-martingale (adapted to $\{ \xi_t \}_t$). 
    
    By 
    % Let $ \eta_t = \frac{1}{\sqrt{t} \log t} $. Then we have $ \sum_{t=1}^\infty \frac{\eta_t^2 \sigma^2 }{2} < \infty $, and by 
    the Robbins--Siegmund theorem, we know $ \{ V_t \}_t $ converges almost surely. Therefore, 
    \begin{align*} 
        \sum_{s=1}^\infty \( \eta_s - L \eta_s^2 \) \( F (\x_s) - F (\x^*) \) 
    \end{align*} 
    exists with probability 1. This means $\sum_{s=1}^\infty \( \eta_s - L \eta_s^2 \) \( F (\x_s) - F (\x^*) \) < \infty $ for almost all points $\omega$ in the sample space $\Omega$. This concludes the proof. 
\end{proof}

%-----------

\section{The Proximal Operator}
\label{section:proximal}

Building upon the formal derivation of the proximal operator and its key properties in Proposition \ref{prop:proximal}, we now present supplementary details regarding its practical implementation.

\subsection{Proximal Operator on Manifolds}

% The proximal operator plays a central role in the algorithms above. 
On a manifold $\M $, the proximal operator for a function $ f $ at a point $\x \in \M $ with parameter $ \eta > 0 $ is defined as $\mathrm{Prox}_{\eta,\x} f$. The point $\y := \mathrm{Prox}_{\eta,\x} f$ is the solution to the equation:
\begin{align}
    \x = \Exp_{\y} \( \eta  \grad f \( \y \) \).
\end{align}

By Proposition \ref{prop:proximal}, a equivalent definition of the proximal operator is given by: 
\begin{align}
    \mathrm{Prox}_{\eta,\x} f := \arg\min_\y \left\{ f (\y) + \frac{1}{2\eta}  d(\x,\y)^2 \right\}. 
\label{eq:prox}
\end{align}

This operator seeks the minimizer of the regularized objective function:
\begin{align}
h_{\eta,\x}(\y) := f (\y) + \frac{1}{2\eta}  d(\x,\y)^2.
\label{eq:prox-h}
\end{align}
Since $ d(\x,\y)^2 $ is strongly convex, the function $ h_{\eta,\x} $ inherits strong convexity when $ f $ is convex. 
% This property is crucial for the convergence analysis of the algorithm.

% By combining Proposition \ref{prop:proximal} above with Definition \ref{def:quail-inner-prod}, we derive the following inner product equality:

% \begin{proposition} 
% \label{prop:proximal-inner} 
%     Let $(\M, g)$ be a Hadamard manifold with distance metric $d$. For a function $f$ defined over $\M$, let $\x \in \M$, $\eta > 0$. Set $\y := \mathrm{Prox}_{\eta,\x} f$ then:
%     \begin{align*} 
%       \< \oa{\x \x^*}, \oa{\x \y} \> 
%       =
%       \frac{1}{2} | \oa{\x\x^*} |^2 - \frac{1}{2} | \oa{\y\x^*} |^2 + \frac{\eta^2}{2} \| \grad f(\y) \|_{\y}^2.  
%     \end{align*} 
% \end{proposition} 

% Unlike Proposition \ref{thm:gd-compare}, this result requires no extra assumptions on the Hadamard manifold while retaining the same key advantage --- it establishes a relationship between the current iterate, the subsequent iterate (of the proximal step), and the optimal point. 
% Specifically, Proposition \ref{prop:proximal-inner} allows $\M$ to be unbounded, and the lower bound of its curvature is allowed to tend to infinity.

\subsection{Implementing the Proximal Operator}

If the proximal operator in algorithms cannot be computed in closed form, one may employ fixed-point iteration or other numerical methods to approximate it. For instance, one may apply gradient descent to minimize $f (\y) + \frac{1}{2\eta}  d(\x,\y)^2$, which is strongly $g$-convex. Building on the convergence analysis of convex (\emph{but not globally smooth}) optimization \citep[e.g.,][]{boyd2004convex,nesterov2018lectures}, we can show that the proximal operator can be efficiently implemented. 

% Fix $\x$, our goal is to find $\y_0 := \mathrm{Prox}_{\eta,\x} f$ such that
% \begin{align*}
%     \Exp_{\y_0} \( \eta  \grad f \( \y_0 \) \) -\x = 0.
% \end{align*}
% We define a mapping $T$ from $\mathcal{M} $ to $\mathbb{R}$ : 
% \begin{align*}
%     T ( \y ) = \Exp_{\y} \( \eta  \grad f \( \y \) \) -\x + \y.
% \end{align*}
% Then we adopt the fixed point iteration for solving equation $T(\y)= \y$. The process would generate the iteration:
% \begin{align*}
%     \y_{t+1} = T( \y_t ).
% \end{align*}
% Its convergence $\y_t \rightarrow \y_0$ can be guaranteed by the properties of the fixed-point iteration, where $\y_0$ is the fixed point such that $T(\y_0)= \y_0$.

% \begin{proposition}
%     If the mapping $T$ is $\alpha$-Lipschitz continuous with $\alpha \in [0,1)$, then it has an unique fixed point $\y_0$ and the fixed point iteration converges to $\y_0$ linearly:
%     \begin{align*}
%         \| T(\y_t) - T(\y_0) \| \le \alpha^t d \( \y_1, \y_0 \).
%     \end{align*}
% \end{proposition}

%--------------------

% This approach generalizes the classical gradient descent method to the setting of Riemannian optimization, where the exponential map ensures that the updates remain on the manifold.

% In Theorem \ref{thm:strong-conv}, we present a convergence guarantee for Algorithm \ref{alg:gd} -- a linear convergence rate for gradient descent when applied to strongly $g$-convex and $L$-smooth functions on a Hadamard manifold. 

We now proceed to analyze gradient descent for optimizing strongly $g$-convex functions. 
% , beginning with the determination of the smoothness parameter $L_0$.
Consider a function $f$ that is $\mu$-strongly $g$-convex, due to strong convexity, distance between starting point $\z_0$ and unique minima $\z^*$ is bounded by
\begin{align*}
    d^2 (\z_0, \z^*) \le \frac{2}{\mu} \( f(\z_0) - f(\z^*) \).
\end{align*}
We define $\D$ as the sublevel set for $\z_0$:
\begin{align*}
    \D := \left\{ x : d (\z, \z_0) \le \sqrt{\frac{2}{\mu} \( f(\z_0) - f(\z^*) \)} \right\}, 
\end{align*}
which is compact due to strong convexity of $f$. 
If $f$ is continuously differentiable, the quantity $\max_{\z \in \D} \|  \grad f(\z) \|$ is bounded, due to compactness of $\D$. We define $\D_0$ as set: 
\begin{align*}
    \D_0 := \left\{ \z : d (\z, \z_0) \le \sqrt{\frac{2}{\mu} \( f(\z_0) - f(\z^*) \)} + \frac{1}{\mu} \max_{\z \in \D} \|  \grad f(\z) \|_\z \right\}, 
\end{align*}
which is also compact. 
Again, when $f$ is continuously differentiable, there exist a constant $L_0 > 0$ such that function $f$ restricted on $\D_0$ is $L_0$-smooth. Specifically, we define $L_0$ as 
\begin{align*}
    L_0 := \max_{\z \in \D_0} \{ \| \grad f(\z) \|  \} .
\end{align*}

The gradient descent algorithm is outlined in Algorithm \ref{alg:gd}. In Theorem \ref{thm:strong-conv}, we present a convergence guarantee for Algorithm \ref{alg:gd}.

\begin{algorithm}[h!] 
    \caption{Gradient Descent} 
    \label{alg:gd} 
    \begin{algorithmic}[1]  
        \STATE \textbf{Input:} step size $\eta$; starting point $\z_0$.
        \FOR{$t=0,1,\cdots$}
            \STATE $ \z_{t+1} = \Exp_{\z_t} \( -\eta \grad f (\z_t)\) $. 
        \ENDFOR 
    \end{algorithmic} 
\end{algorithm} 

\begin{theorem}
    \label{thm:strong-conv}
    Let $(\M,g)$ be a Hadamard manifold with distance metric $d$. Let $f$ be strongly $g$-convex with parameter $\mu$, and continuously differentiable. Then the Gradient Descent algorithm (Algorithm \ref{alg:gd}) with step size $\eta = \frac{1}{L_0} $ satisfies 
    \begin{align} 
        f (\z_t) - f (\z^*) 
        \le 
        \( 1 - \frac{\mu}{4 L_0} \)^{t} 
        \( f (\z_0) - f (\z^*) \) , \label{eq:thm1}
    \end{align} 
    where $\z^*$ is the unique minimizer of $f$, and $L_0$ is defined as above. 
\end{theorem}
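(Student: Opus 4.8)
The plan is to transplant the classical linear-convergence proof for smooth and strongly convex minimization into the Hadamard setting, with one essential modification: since $f$ is only assumed continuously differentiable rather than globally smooth, the whole argument must be localized to the compact set $\D_0$, on which a valid smoothness constant $L_0$ is available. The proof rests on three pillars: a sufficient-decrease (descent) lemma from local smoothness, a gradient-domination (Polyak--Lojasiewicz-type) inequality from strong $g$-convexity, and an inductive containment argument ensuring that the entire trajectory -- iterates together with the connecting geodesics -- never leaves $\D_0$, so that $L_0$-smoothness is legitimately applicable at every step.

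The containment is the main obstacle, so I would establish it first, by induction on $t$. The key device is that $f$ decreases monotonically along the iterates (a byproduct of the descent lemma below), so every iterate stays in the sublevel set $\{\, \z : f(\z) \le f(\z_0) \,\}$. Strong $g$-convexity, evaluated at the minimizer where $\grad f(\z^*) = 0$, gives $\frac{\mu}{2} d^2(\z, \z^*) \le f(\z) - f(\z^*)$, so this sublevel set is a compact neighborhood of $\z^*$ of radius $\sqrt{2(f(\z_0) - f(\z^*))/\mu}$; this is the role of $\D$. Because $\grad f$ is continuous, its norm is uniformly bounded on this compact region, so each gradient step $\z_t \mapsto \z_{t+1}$ moves a geodesic distance $\eta \|\grad f(\z_t)\|$ that is controlled; the enlarged set $\D_0$ is defined precisely so that it contains both the sublevel set and every such step emanating from it. Consequently the whole trajectory lies in $\D_0$, which legitimizes using the single constant $L_0$ at each iteration.

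Given containment, the descent lemma is routine: local $L_0$-smoothness along the geodesic from $\z_t$ to $\z_{t+1} = \Exp_{\z_t}(-\eta \grad f(\z_t))$ yields the quadratic upper bound $f(\z_{t+1}) \le f(\z_t) + \< \grad f(\z_t), \Exp_{\z_t}^{-1}(\z_{t+1}) \>_{\z_t} + \frac{L_0}{2} d^2(\z_t, \z_{t+1})$, and substituting $\Exp_{\z_t}^{-1}(\z_{t+1}) = -\eta \grad f(\z_t)$ with $\eta = 1/L_0$ collapses this to a sufficient-decrease estimate of the form $f(\z_{t+1}) \le f(\z_t) - c\,\|\grad f(\z_t)\|_{\z_t}^2$ for an explicit $c > 0$; in particular $f(\z_{t+1}) \le f(\z_t)$, which closes the induction above.

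Finally I would combine the two estimates through gradient domination. Strong $g$-convexity yields $f(\z^*) \ge f(\z_t) + \< \grad f(\z_t), \Exp_{\z_t}^{-1}(\z^*) \>_{\z_t} + \frac{\mu}{2} d^2(\z_t, \z^*)$; bounding the inner product by Cauchy--Schwarz and maximizing the resulting concave quadratic in the single variable $d(\z_t, \z^*)$ produces the Polyak--Lojasiewicz-type bound $f(\z_t) - f(\z^*) \le \frac{1}{2\mu} \|\grad f(\z_t)\|_{\z_t}^2$. Substituting this into the sufficient-decrease estimate converts the additive decrease into a multiplicative one, $f(\z_{t+1}) - f(\z^*) \le (1 - \rho)\,(f(\z_t) - f(\z^*))$ with $\rho$ of order $\mu/L_0$, and unrolling the recursion from step $t$ back to step $0$ gives the geometric rate. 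The constant $1 - \mu/(4L_0)$ in the statement is the explicit value obtained once the localized smoothness constant is plugged in; any slack in the descent or domination constants is harmless, since it only sharpens the contraction factor.
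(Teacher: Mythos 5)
Your proposal follows essentially the same route as the paper's proof: an inductive containment argument keeping the iterates (and connecting geodesics) inside $\D_0$ so that the constant $L_0$ is usable, the descent step $f(\z_{t+1}) \le f(\z_t) - \frac{1}{2L_0}\|\grad f(\z_t)\|_{\z_t}^2$ from local smoothness with $\eta = 1/L_0$, and a gradient-domination inequality from strong $g$-convexity, combined and unrolled into the geometric rate. The only differences are minor refinements: you anchor the sublevel-set containment at $\z^*$ rather than at $\z_0$, and you derive the sharper domination constant $\frac{1}{2\mu}$ instead of the paper's $\frac{2}{\mu}$ (Proposition \ref{prop:basic-2}), which only improves the contraction factor and therefore still yields \eqref{eq:thm1}.
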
 

\begin{proof} 
    We first show by induction that $\z_{t} $ belong to $\D_0$. The initial $\z_0 \in \D \subset \D_0$ holds by definition. Assume that $\z_t \in \D$.  Since $ \z_{t+1} = \Exp_{\z_t} \( -\eta \grad f (\z_t)\) $, distance $| \oa{\z_{t+1} \z_{t} } | = \frac{1}{L_0} \| \grad f (\z_t) \| \le \frac{1}{\mu} \max_{\z \in \D_0} \|  \grad f(\z) \|$. 
    Hence $\z_{t+1}$ belong to $\D_0$. We hope to proof $\z_{t+1} \in \D$ by establishing $f (\z_{t+1}) \le f (\z_{t})$.
    It suffice to show that
    \begin{align*}
        f (\z_{t+1}) - f(\z^*)
        \le
        \( 1-\frac{\mu}{4 L_0} \) \( f (\z_t) - f(\z^*) \),
    \end{align*}
    which will be proved next.

    As $L_0$-smoothness holds between $\z_t$ and $\z_{t+1}$, we have 
    \begin{align}
        f (\z_{t+1}) 
        \le &
        f (\z_t) + \< \grad f (\z_t) , \Exp_{\z_t}^{-1} \(\z_{t+1}\) \>_{\z_t} + \frac{L_0}{2} | \oa{\z_{t+1} \z_{t} } |^2 \nonumber \\ 
        =& \; 
        f (\z_t) + \( - \frac{1}{L_0} + \frac{1}{2 L_0} \) \| \grad f (\z_t) \|_{\z_t}^2 \nonumber \\
        = & \;
        f (\z_t) - \frac{1}{2 L_0} \| \grad f (\z_t) \|_{\z_t}^2. 
        \label{eq:tmp3}
    \end{align}

    By property of strong convexity, we have
    \begin{align}
        f(\z_{t}) - f(\z^*) \le \frac{2}{\mu} \| \grad f(\z_t) \|_{\z_t}^2.
        \label{eq:tmp4}
    \end{align}

    Plugging Eq. (\ref{eq:tmp4}) into Eq. (\ref{eq:tmp3}) gives 
    \begin{align*} 
        f (\z_{t+1}) 
        \le
        \( 1-\frac{\mu}{4 L_0} \) f (\z_t) + \frac{\mu}{ 4 L_0} f(\z^*). 
    \end{align*} 
    
    Now we rearrange terms to get 
    \begin{align*} 
        f (\z_{t+1}) - f(\z^*)
        \le
        \( 1-\frac{\mu}{4 L_0} \) \( f (\z_t) - f(\z^*) \).
    \end{align*}

\end{proof}

\begin{remark} 
    % When solving a specific proximal step in the algorithm \eqref{eq:proximal-iter}, we can set $\z_0 = \x_t$, and the $\x_{t+1} = \z^*$ will be close to $\z_0$. 
    When solving the proximal step in \eqref{eq:proximal-iter}, we can initialize Algorithm \ref{alg:gd} with $\z_0 = \x_t$. Since the solution $\x_{t+1} = \z^*$ will be close to $\z_0$, this provides a warm start. 
\end{remark} 

%---------------------

\section{Conclusion}

In this work, we propose proximal-based methods for geodesically convex optimization on Hadamard manifolds. We focus on optimizing $g$-convex functions in both deterministic and stochastic settings. By leveraging the quasilinearized inner product, we eliminate strongly assumptions required by previous works, and rigorously establish convergence rates for these optimization problems on Hadamard manifolds.

By unifying generality with efficiency, our framework resolves the tension between restrictive geometric assumptions and practical convergence guarantees, advancing the applicability of convex optimization in non-Euclidean spaces.

On a broader scale, the concept of quasilinearization can be extended to analyze a wide variety of optimization algorithms. This includes sub-gradient methods for nonsmooth optimization problems, stochastic variance-reduced gradient (SVRG) techniques, and accelerated optimization frameworks derived from quasilinearization principles, among others. The versatility of quasilinearization allows it to provide valuable insights into the convergence behavior and efficiency of these methods, making it a powerful tool in both theoretical and applied optimization research.

\begin{acknowledgements}
The authors thank Nicolas Boumal for insightful comments. 
\end{acknowledgements}

% BibTeX users please use one of
%\bibliographystyle{spbasic}      % basic style, author-year citations
%\bibliographystyle{spmpsci}      % mathematics and physical sciences
%\bibliographystyle{spphys}       % APS-like style for physics
%\bibliography{}   % name your BibTeX data base

\bibliographystyle{spmpsci}
\bibliography{references} 

\appendix

\section{Some Known Facts}

% \section{Omitted Proofs} 

In the Appendix we list some known facts, with proofs provided for completeness. 

\begin{proposition}
    \label{prop:basic}
    Let $f$ be convex and $L$-smooth with $\x^*$ as its minimum. Then for any $\x$ it holds that 
    \begin{align*}
        f (\x^* ) 
        \le 
        f (\x) - \frac{1}{2L} \| \grad f (\x) \|_{\x}^2 , \quad \forall \x . 
    \end{align*}
\end{proposition}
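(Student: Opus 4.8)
The plan is to replicate the classical Euclidean ``descent lemma'' argument, transported to the manifold via the Riemannian $L$-smoothness inequality. The crucial observation is that only the quadratic \emph{upper} bound supplied by smoothness is needed; no curvature lower bound or triangle comparison enters, so the estimate is unconditional on a Hadamard manifold. Convexity, meanwhile, plays essentially no role in the final inequality beyond guaranteeing that the global minimizer $\x^*$ is meaningful.

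First I would invoke $L$-smoothness along the (unique) minimizing geodesic from $\x$ to an arbitrary point $\y$, which gives
\begin{align*}
    f (\y) \le f (\x) + \< \grad f (\x) , \Exp_{\x}^{-1} (\y) \>_{\x} + \frac{L}{2} d^2 (\x, \y) .
\end{align*}
This is precisely the smoothness bound already used in the proof of Theorem \ref{thm:strong-conv}, and on a Hadamard manifold it is valid for every $\y$ because the exponential map is a global diffeomorphism. Next I would specialize to the gradient step $\y = \Exp_{\x} \( - \tfrac{1}{L} \grad f (\x) \)$, for which $\Exp_{\x}^{-1} (\y) = - \tfrac{1}{L} \grad f (\x)$ and $d^2 (\x, \y) = \tfrac{1}{L^2} \| \grad f (\x) \|_{\x}^2$. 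Substituting and collecting terms yields
\begin{align*}
    f (\y) \le f (\x) - \frac{1}{L} \| \grad f (\x) \|_{\x}^2 + \frac{1}{2L} \| \grad f (\x) \|_{\x}^2 = f (\x) - \frac{1}{2L} \| \grad f (\x) \|_{\x}^2 .
\end{align*}
Finally, since $\x^*$ is the global minimizer of $f$, we have $f (\x^*) \le f (\y)$, and chaining this with the previous display gives the claim.

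The argument is routine; the only point worth flagging is the global validity of the smoothness inequality. On a general manifold the descent lemma requires the gradient step to stay within the injectivity radius, but on a Hadamard manifold geodesics are unique and defined for all time, so the step $\Exp_{\x} \( - \tfrac{1}{L} \grad f (\x) \)$ is always well-defined and the inequality applies without any restriction on step size or domain. This mirrors how the rest of the paper dispenses with the a-priori bounded-domain assumption, and it is why I expect no genuine obstacle here beyond carefully stating the smoothness hypothesis.
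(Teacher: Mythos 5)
Your proof is correct and takes essentially the same route as the paper's: both evaluate the $L$-smoothness upper bound at the trial point $\Exp_{\x}\( -\tfrac{1}{L} \grad f (\x) \)$ and then chain it with $f(\x^*) \le f\(\Exp_{\x}\( -\tfrac{1}{L} \grad f (\x) \)\)$ from minimality, differing only in the order in which the two inequalities are applied. Your observation that convexity plays no real role (only smoothness and minimality of $\x^*$ are used) is also accurate and consistent with the paper's argument.
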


\begin{proof}
% [Proof of Proposition \ref{prop:basic}]
    It holds that 
    \begin{align*} 
        f (\x^* ) 
        \le& \;  
        f \( \Exp_{\x} \( - \frac{1}{L} \grad f (\x) \) \) \\ 
        \le& \;  
        f (\x) + \< \grad f (\x) , - \frac{1}{L} \grad f (\x) \>_{\x} + \frac{L}{2} \cdot \frac{1}{L^2} \| \grad f (\x) \|_{\x}^2 
        = 
        f (\x) - \frac{1}{2L} \| \grad f (\x) \|_{\x}^2 , 
        % \< \grad f (\x) , - \frac{1}{L} \grad f (\x) \>_{\x_t} + \frac{L}{2} \cdot \frac{1}{L^2} \| \grad f (\x_t) \|_{\x_t}^2 
    \end{align*} 
    where the former inequality comes from minimum $\x^*$ and the latter one uses definition of $L$-smooth.
\end{proof}

\begin{proposition}
    \label{prop:basic-2}
    Let $f$ be $\mu$-strongly $g$-convex with $\x^*$ as its minimum. Then for any $\x$ it holds that 
    \begin{align*}
        f (\x ) - f (\x^*)
        \le 
         \frac{2}{\mu} \| \grad f (\x) \|_{\x}^2 , \quad \forall \x . 
    \end{align*}
\end{proposition}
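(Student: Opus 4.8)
The plan is to instantiate the defining inequality of $\mu$-strong $g$-convexity (Definition \ref{def:g-conv}) at a single well-chosen pair of points and then reduce everything to a scalar optimization. Since the definition reads $f(\x) \ge f(\y) + \< \grad f(\y), \Exp_\y^{-1}(\x) \>_\y + \frac{\mu}{2} d^2(\x,\y)$ for all $\x,\y$, I would take the base point to be $\x$ and the comparison point to be the minimizer $\x^*$; that is, substitute $\y \mapsto \x$ and $\x \mapsto \x^*$ to obtain
\begin{align*}
    f(\x^*) \ge f(\x) + \< \grad f(\x), \Exp_\x^{-1}(\x^*) \>_\x + \frac{\mu}{2} d^2(\x,\x^*).
\end{align*}
This single inequality already contains all the information needed; no telescoping or iteration is required here.

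Next I would rearrange this into an upper bound on the suboptimality gap,
\begin{align*}
    f(\x) - f(\x^*) \le - \< \grad f(\x), \Exp_\x^{-1}(\x^*) \>_\x - \frac{\mu}{2} d^2(\x,\x^*),
\end{align*}
and control the inner-product term by Cauchy--Schwarz in the Euclidean inner product space $(T_\x\M, g_\x)$, giving $- \< \grad f(\x), \Exp_\x^{-1}(\x^*) \>_\x \le \| \grad f(\x) \|_\x \, \| \Exp_\x^{-1}(\x^*) \|_\x$. The key geometric fact I would invoke is that on a Hadamard manifold the exponential map at $\x$ is a global diffeomorphism whose radial geodesics are minimizing, so that $\| \Exp_\x^{-1}(\x^*) \|_\x = d(\x,\x^*)$. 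Writing $r := d(\x,\x^*)$, the gap is then bounded by the scalar expression $\| \grad f(\x) \|_\x \, r - \frac{\mu}{2} r^2$.

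Finally I would maximize this one-dimensional quadratic over $r \ge 0$: it attains its maximum at $r = \| \grad f(\x) \|_\x / \mu$, yielding $f(\x) - f(\x^*) \le \frac{1}{2\mu} \| \grad f(\x) \|_\x^2$. Since $\frac{1}{2\mu} \le \frac{2}{\mu}$, the claimed bound follows immediately, in fact with considerable slack. An equivalent route is to note that, because $\Exp_\x$ is onto, the right-hand side of the strong-convexity inequality, viewed as a function of the displacement $\v = \Exp_\x^{-1}(\y) \in T_\x\M$, may be minimized freely over the whole tangent space, and $f(\x^*) = \min_\y f(\y)$ dominates that minimum; this gives the same constant $\frac{1}{2\mu}$.

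I do not anticipate a genuine obstacle in this argument. The only step that uses the manifold structure in an essential way is the identity $\| \Exp_\x^{-1}(\x^*) \|_\x = d(\x,\x^*)$, which rests on the Cartan--Hadamard theorem (uniqueness and minimality of geodesics); everything else is Cauchy--Schwarz followed by the maximization of a scalar quadratic. The mild looseness between the derived constant $\frac{1}{2\mu}$ and the stated $\frac{2}{\mu}$ simply reflects that the proposition is phrased in a convenient, non-tight form.
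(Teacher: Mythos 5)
Your proof is correct, and it diverges from the paper's at the final step in a way worth noting. Both arguments begin identically: instantiate the strong $g$-convexity inequality at the pair $(\x, \x^*)$, rearrange, and apply Cauchy--Schwarz in $(T_\x\M, g_\x)$ together with the Hadamard identity $\| \Exp_\x^{-1}(\x^*) \|_\x = d(\x,\x^*)$. The paper then \emph{discards} the quadratic term in one inequality to first establish the distance bound $d(\x,\x^*) \le \frac{2}{\mu}\| \grad f(\x) \|_\x$, and plugs that back into the linear bound $f(\x) - f(\x^*) \le \| \grad f(\x) \|_\x \, d(\x,\x^*)$, arriving at the constant $\frac{2}{\mu}$. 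You instead \emph{keep} the quadratic term and maximize the concave scalar function $r \mapsto \| \grad f(\x) \|_\x \, r - \frac{\mu}{2} r^2$ over $r \ge 0$, which is the standard Polyak--\L{}ojasiewicz-type derivation and yields the sharper constant $\frac{1}{2\mu}$ --- a factor of $4$ better than the stated bound, which you then relax to $\frac{2}{\mu}$. Your route is both tighter and arguably cleaner (one inequality chain rather than a bound-and-reinsert step); the paper's two-step version has the minor side benefit of making the distance bound $d(\x,\x^*) \le \frac{2}{\mu}\| \grad f(\x) \|_\x$ explicit, which can be independently useful. Your alternative remark --- minimizing the right-hand side of the strong-convexity inequality freely over the tangent space --- is also valid and gives the same constant.
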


\begin{proof}
% [Proof of Proposition \ref{prop:basic-2}]
    By strongly $g$-convex,
    \begin{align}
        f(\x^*) \ge f(\x) + \< \grad f(\x), \Exp^{-1}_{\x}(\x^*) \>_{\x} + \frac{\mu}{2} d(\x,\x^*)^2.
        \label{eq:prop-basic-2}
    \end{align}
    Because $\x^*$ is minimum, $f(\x^*)-f(\x)\le 0$. Combined with Cauchy--Schwarz inequality,
    \begin{align*}
        \frac{\mu}{2} d(\x,\x^*)^2 
        \le
        - \< \grad f(\x), \Exp^{-1}_{\x}(\x^*) \>_{\x}
        \le 
        \| \grad f(\x) \|_{\x} d(\x,\x^*).
    \end{align*}
    Hence, $\| \grad f(\x) \|_{\x} \ge \frac{\mu}{2}d(\x,\x^*)$. Inserting this result into Eq. (\ref{eq:prop-basic-2}) gives
    \begin{align*}
        f(\x) - f(\x^*) 
        \le 
        \| \grad f(\x)\|_{\x} d(\x,\x^*) 
        \le 
        \frac{2}{\mu} \| \grad f(\x)\|_{\x}^2.
    \end{align*}
\end{proof}

\end{document}